\newtheorem{lemma}{Lemma}
\newcommand{\en}{\mathbb{N}}
\newcommand{\qu}{\mathbb{Q}}
\newcommand{\zet}{\mathbb{Z}}
\newcommand{\er}{\mathbb{R}}
\newcommand{\ce}{\mathbb{C}}
\newcommand{\be}{\mathcal{B}}
\newcommand{\alf}{\mbox{\boldmath{$\alpha$}}}
\newtheorem{theorem}{Theorem}
\newtheorem{corollary}{Corollary}
\begin{document}
\title{Weyl's inequality and systems of forms}
\author{Rainer Dietmann}
\address{Department of Mathematics, Royal Holloway, University of London\\
TW20 0EX Egham, United Kingdom}
\thanks{This work has been supported by grant EP/I018824/1 `Forms in many
variables'.}
\email{Rainer.Dietmann@rhul.ac.uk}
\subjclass[2000]{Primary 11D72, 11E12, 11E76, 11P55}
\begin{abstract}
Let $F_1, \ldots, F_r$ be integer forms of degree $d \ge 2$ in
$s$ variables.
Relaxing the non-singularity condition in a well known result by Birch,
we establish the expected Hardy-Littlewood asymptotic formula
for the density of integer points on the intersection $F_1 = \ldots =
F_r = 0$, providing that
\[
  s>\max_{\mathbf{a} \in \zet^r \backslash \{\mathbf{0}\}}
  \dim V_\mathbf{a}^* + r(r+1) (d-1) 2^{d-1},
\]
where 
$V_\mathbf{a}^* = \{\mathbf{x} \in \ce^s:
\nabla(a_1 F_1(\mathbf{x}) + \ldots + a_r F_r(\mathbf{x})) =
\mathbf{0}\}$.
In the same context, we also
improve on previous work by Schmidt and show that the expected
Hardy-Littlewood asymptotic formula holds true, providing that
each form in the rational pencil of $F_1, \ldots, F_r$ has
\begin{itemize}
  \item rank exceeding $2r^2+2r \; (d=2)$,
  \item $h$-invariant exceeding $8r^2+8r \; (d=3)$,
  \item $h$-invariant exceeding $\varphi(d) (d-1) 2^{d-1} r(r+1)$ for
  a certain function $\varphi(d)$ when $d \ge 4$.
\end{itemize}
In particular, if $F_1, \ldots, F_r$ are rational quadratic forms, each form
in their complex pencil has rank exceeding $2r^2+2r$, and the intersection
$F_1=\ldots=F_r=0$ has a non-singular real zero, then this intersection also
has a non-trivial rational zero. For $r=1$, this recovers a classical
result by Meyer.\\
Our new tool, which is of interest in itself, is a variant of Weyl's inequality
for general systems of forms which is useful in situations like those above
where one knows a certain lower rank (or dimension of singular locus)
bound for all forms in the rational pencil of the given ones.
\end{abstract}
\maketitle
\section{Introduction}
\label{mallorca}
Let $F_1, \ldots, F_r \in \zet[X_1, \ldots, X_s]$ be forms of degree $d$,
and let $V^*$ be the union of the loci of singularities of the varieties
\[
  F_1(\mathbf{x}) = \mu_1, \ldots, F_r(\mathbf{x}) = \mu_r;
\]
in the following all dimensions are meant to be affine.
Then Birch (\cite{B}) has shown that if
\begin{equation}
\label{virginia_water}
  s > \dim V^* + r(r+1)(d-1)2^{d-1},
\end{equation}
then the expected Hardy-Littlewood asymptotic formula
\[
  \mathfrak{J} \mathfrak{S} P^{s-rd} + O(P^{s-rd-\delta})
\]
for the number of integer solutions
$\mathbf{x} \in \zet^s$ of the system
\[
  F_1(\mathbf{x}) = \ldots = F_r(\mathbf{x}) = 0
\]
holds true, where the $\mathbf{x}$ are constrained to an expanding box
of size $P$, and $\mathfrak{J}$ and $\mathfrak{S}$
are the singular integral and the singular series, respectively.
Alternatively, $V^*$ can also be described as the variety of all
points $\mathbf{x} \in \ce^s$ for which the rank of the Jacobi matrix
\[
  \left( \frac{\partial F_i(\mathbf{x})}{\partial x_j}
  \right)_{\substack{1 \le i \le r\\1 \le j \le s}}
\]
is less than $r$ (see also \cite{AM}).
For $r=1$, $\dim V^*$ is just the dimension
of the singular locus of the variety
\[
  V=\{\mathbf{x} \in \ce^s:
  F_i(\mathbf{x})=0 \;\;\; (1 \le i \le r)\},
\]
but for $r>1$ the quantity $\dim V^*$ can exceed
the dimension of the singular locus of $V$: For example, suppose that
$V$ is non-singular. Then it is still possible to find
$a_1, \ldots, a_r \in \ce$,
not all zero, for which the discriminant
$\operatorname{disc}(a_1 F_1 + \ldots + a_r F_r)$ is zero. This
implies that there is an $\mathbf{x} \in \ce^n \backslash
\{\mathbf{0}\}$ for which
\[
  \nabla(a_1 F_1(\mathbf{x}) + \ldots + a_r F_r(\mathbf{x})) =
  a_1 \nabla F_1(\mathbf{x}) + \ldots + a_r \nabla F_r(\mathbf{x}) =
  \mathbf{0},
\]
whence $\mathbf{x} \in V^*$ and $\dim V^* \ge 1$.
As another example, consider a system of $r$ diagonal forms
$F_1, \ldots, F_r$ of degree $d$ with non-singular coefficient matrix.
Then $V$ is non-singular, but by applying row operations we can
find a form in the pencil of $F_1, \ldots, F_r$ with $r-1$ coefficients
being zero, which shows that $\dim V^* \ge r-1$.\\
Our first aim
in this paper is to show that Birch's condition
\eqref{virginia_water} can be relaxed to the effect that
$\dim V^*$ can be replaced by
\[
  \max_{\mathbf{a} \in \zet^r \backslash \{\mathbf{0}\}}
  \dim V_\mathbf{a}^*,
\]
where
\[
  V_\mathbf{a}^* = \{\mathbf{x} \in \ce^s:
  \nabla(a_1 F_1(\mathbf{x}) + \ldots + a_r F_r(\mathbf{x})) =
  \mathbf{0}\}.
\]
In particular, for $\mathbf{a} \ne \mathbf{0}$ we have
$V_\mathbf{a}^* \subset V^*$, so
\[
  \max_{\mathbf{a} \in \zet^r \backslash \{\mathbf{0}\}}
  \dim V_\mathbf{a}^* \le \dim V^*,
\]
and for $r=1$ the two quantities are the same.
However, for $r>1$ strict inequality
\[
  \max_{\mathbf{a} \in \zet^r \backslash \{\mathbf{0}\}}
  \dim V_\mathbf{a}^* < \dim V^*
\]
is possible, as the left hand side only involves the rational
pencil of $F_1, \ldots, F_r$, whereas the right hand side 
implicitly also includes
the complex one. For instance, suppose that $F_1, \ldots, F_r$
have the property that the equation
\begin{equation}
\label{ente}
  \operatorname{disc}(a_1 F_1 + \ldots + a_r F_r) = 0
\end{equation}
has no solution $\mathbf{a} \in \zet^r \backslash \{\mathbf{0}\}$;
one example for $r=d=2, s=13$ which can be easily checked by a computer
algebra package would be given by the pair of quadratic forms
\begin{align*}
  F_1(X_1, \ldots, X_{13}) & = X_1^2 + \ldots + X_9^2
  + X_{10}^2 + 2 X_{11}^2 + X_{12}^2 + 2 X_{13}^2,\\
  F_2(X_1, \ldots, X_{13}) & =
  2 \sum_{j=0}^2 \left( X_{3j+1} X_{3j+2} + 3 X_{3j+1} X_{3j+3} +
  2 X_{3j+2} X_{3j+3} \right)\\
  & + 2 X_{10} X_{11} + 2 X_{12} X_{13}.
\end{align*}
Then \eqref{ente} having no solution $\mathbf{a} \in \zet^r
\backslash \{\mathbf{0}\}$ implies that
\[
  \max_{\mathbf{a} \in \zet^r \backslash \{\mathbf{0}\}}
  \dim V_\mathbf{a}^* = 0,
\]
whereas as shown above $\dim V^* \ge 1$ for $r>1$,
even if the variety $V$ is non-singular.
Summarising these observations, we therefore find that the following result
is always of the same quality as Birch's one given by
\eqref{virginia_water} regarding the number of required variables,
and for $r>1$ can even be stronger as illustrated by the example
above.
\begin{theorem}
\label{stammtisch}
Let $F_1, \ldots, F_r \in \zet[X_1, \ldots, X_s]$ be forms of degree
$d \ge 2$, and suppose that
\[
  s>\max_{\mathbf{a} \in \zet^r \backslash \{\mathbf{0}\}}
  \dim V_\mathbf{a}^* + r(r+1) (d-1) 2^{d-1}.
\]
Then if $\be$ is a box in $\er^s$ with sides parallel to the coordinate
axes, and contained in the unit box, then there exists a positive $\delta$
such that for the quantity
\[
  \varrho(P) := \#\{\mathbf{x} \in \zet^s:\mathbf{x} \in P\be \mbox{ and }
  F_i(\mathbf{x}) = 0 \; (1 \le i \le r)\}
\]
the asymptotic formula
\begin{equation}
\label{unknown}
  \varrho(P) = \mathfrak{J} \mathfrak{S} P^{s-rd} + O(P^{s-rd-\delta})
\end{equation}
holds true, where $\mathfrak{J}$ and $\mathfrak{S}$
are the singular integral and the singular series, respectively.
\end{theorem}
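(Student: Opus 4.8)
\emph{Sketch of the intended proof.} The plan is to run the Hardy--Littlewood circle method exactly along the lines of Birch's proof of \eqref{virginia_water}, changing only one ingredient: Birch's Weyl-type minor-arc estimate, which is what makes $\dim V^*$ appear, is to be replaced by a variant tailored to the rational pencil of $F_1, \ldots, F_r$ --- this is the ``variant of Weyl's inequality'' announced in the abstract. Write $\mathbf{F} = (F_1, \ldots, F_r)$ and, for $\alf \in \er^r$, set
\[
  S(\alf) = \sum_{\mathbf{x} \in P\be \cap \zet^s} e\bigl(\alpha_1 F_1(\mathbf{x}) + \ldots + \alpha_r F_r(\mathbf{x})\bigr),
\]
so that $\varrho(P) = \int_{[0,1]^r} S(\alf)\,d\alf$. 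One dissects $[0,1]^r$ into major arcs --- small boxes of radius $\asymp q P^{\theta-d}$ about rationals $\mathbf{a}/q$ with $q \le P^\theta$ for a small fixed $\theta > 0$ --- and a complementary set of minor arcs.

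On the minor arcs one applies Weyl differencing $d-1$ times to $|S(\alf)|^{2^{d-1}}$, bounding it by $P^{(2^{d-1}-1)s}$ times the number $N(\alf;P)$ of integer tuples $(\mathbf{h}^{(1)}, \ldots, \mathbf{h}^{(d-1)})$ in a box of side $\asymp P$ for which the vector $\mathbf{B}(\mathbf{h}^{(1)}, \ldots, \mathbf{h}^{(d-1)})$ of multilinear forms attached to $\mathbf{F}$ satisfies $\|\alf \cdot \mathbf{B}(\mathbf{h}^{(1)}, \ldots, \mathbf{h}^{(d-1)})\| < P^{-1}$. The heart of the matter is a dichotomy for $N(\alf;P)$, obtained from a geometry-of-numbers argument (Davenport's shrinking lemma for the relevant family of lattices) together with an analysis of the degeneracy locus of the multilinear system: either $N(\alf;P) \ll P^{(d-1)s - (s - \Delta) + \varepsilon}$, or $\alf$ lies in a genuine major arc about some $\mathbf{a}/q$ for which $\dim V^*_{\mathbf{a}} \ge \Delta$. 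The decisive point --- and this is precisely where Birch's argument is sharpened --- is that the degenerate direction produced here is an \emph{integer} vector $\mathbf{a} \in \zet^r \setminus \{\mathbf{0}\}$ rather than merely a complex one: the passage from ``$\alf \cdot \mathbf{B}$ is small for many $\mathbf{h}$-tuples'' to ``$\alf$ is close to a rational point with the stated degeneracy'' takes place over $\qu$, since the $F_i$, and hence the components of $\mathbf{B}$, have rational coefficients, and the count of $\mathbf{h}$-tuples on which the integral multilinear form $\mathbf{a} \cdot \mathbf{B}$ nearly vanishes is controlled by the dimension $\dim V^*_{\mathbf{a}}$. One may therefore take $\Delta = \max_{\mathbf{a} \in \zet^r \setminus \{\mathbf{0}\}} \dim V^*_{\mathbf{a}} + 1$; the hypothesis $s > (\Delta-1) + r(r+1)(d-1)2^{d-1}$ then forces, on the minor arcs, the first alternative to hold, and the bound it provides for $|S(\alf)|$ integrates to $O(P^{s-rd-\delta})$ for a suitable $\delta > 0$.

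On the major arcs the usual analysis gives the main term $\mathfrak{J}\mathfrak{S}P^{s-rd}$ with an admissible error, and the singular series $\mathfrak{S}$ and singular integral $\mathfrak{J}$ converge absolutely; this is handled as in Birch, again seeing only the rational pencil. For $\mathfrak{S}$ one needs the analogue of the above Weyl bound for complete exponential sums modulo $q$ attached to $a_1 F_1 + \ldots + a_r F_r$, whose quality is governed by the rank --- respectively, by the dimension of the singular locus --- of this form for integer $\mathbf{a}$, hence by $\max_{\mathbf{a}} \dim V^*_{\mathbf{a}}$, the finitely many primes at which some form in the pencil reduces to one of unexpectedly low rank being treated separately. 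For $\mathfrak{J}$ one uses that the oscillatory integral $\int_{\be} e(\alf\cdot\mathbf{F}(\mathbf{y}))\,d\mathbf{y}$ decays, for $\alf$ outside a thin cone of exceptional directions, at a rate governed by the generic --- hence rational --- value of $\dim V^*_{\mathbf{a}}$, the contribution of directions near the exceptional cone being controlled separately using its positive codimension. Combining the major- and minor-arc estimates yields \eqref{unknown}.

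The main obstacle is the dichotomy underlying the new Weyl inequality: one must run the shrinking-lemma argument, together with the elimination/resultant estimate bounding the count of near-vanishing $\mathbf{h}$-tuples in terms of $\dim V^*_{\mathbf{a}}$, while keeping exact track of rationality --- so as to be certain that the degenerate direction extracted from a large value of $N(\alf;P)$ is honestly a vector in $\zet^r$, whence the relevant dimension is $\dim V^*_{\mathbf{a}}$ and not the a priori larger $\dim V^*$, which tacitly involves the complex pencil --- and to do so with constants uniform in $\alf$, so that the single quantity $\max_{\mathbf{a} \in \zet^r \setminus \{\mathbf{0}\}} \dim V^*_{\mathbf{a}}$ governs the entire minor-arc integral.
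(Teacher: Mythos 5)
Your sketch follows essentially the same route as the paper: Birch's circle method is run verbatim except that in the Weyl-differencing dichotomy the degenerate direction extracted from a large count of multilinear near-solutions is observed to be an \emph{integer} vector $\mathbf{a}$ (because the matrix of multilinear forms has integer entries, a rank deficiency yields a dependency over $\zet$), and the resulting count is then bounded by $H^{(d-1)s-\operatorname{codim}V_{\mathbf{a}}^*}$ via the diagonal-intersection argument, so that only $\max_{\mathbf{a}\in\zet^r\setminus\{\mathbf{0}\}}\dim V_{\mathbf{a}}^*$ enters. This is precisely the paper's Lemmas on the modified alternative (iii) and its geometric exclusion, followed by the rest of Birch's argument unchanged; the only slight imprecision in your write-up is the conflation of the major-arc numerator $\mathbf{a}/q$ with the degenerate pencil direction $\mathbf{a}$, which in the actual argument are distinct objects appearing in distinct alternatives.
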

For instance, as shown by the example above, Theorem \ref{stammtisch}
handles `generic' pairs of quadratic forms in $13$ variables; see
also \cite{Munshi} for recent work by Munshi which gives a stronger
result in this special case $r=d=2$.\\
Usually, $V^*$ as well as the $V_\mathbf{a}^*$ are difficult to understand,
and one would prefer a condition
which is arithmetically easier to handle.
This point of view was taken up by Schmidt
(\cite{S1}, \cite{S2}) for the case of quadratic and cubic forms,
and in \cite{Schmidt} for forms of any degree.
For a system of $r$ rational quadratic forms, he could show that an
asymptotic formula for the number of integer zeros in an expanding region
holds true, provided that every form in the rational pencil has rank
exceeding $2r^2+3r$. Birch's condition (\ref{virginia_water}) for $d=2$
reads $s>\dim V^*+2r^2+2r$, so one might wonder if Schmidt's rank bound
$2r^2+3r$ can be relaxed to $2r^2+2r$. This is indeed the case, as
illustrated by our first result.
\begin{theorem}
\label{see}
Let $F_1, \ldots, F_r \in \zet[X_1, \ldots, X_s]$ be quadratic forms,
such that each form in their rational pencil has rank exceeding $2r^2+2r$.
Then if $\be$ is a box in $\er^s$ with sides parallel to the coordinate
axes, and contained in the unit box, then there exists a positive $\delta$
such that for the quantity
\[
  \varrho(P) := \#\{\mathbf{x} \in \zet^s:\mathbf{x} \in P\be \mbox{ and }
  F_i(\mathbf{x}) = 0 \; (1 \le i \le r)\}
\]
the asymptotic formula
\begin{equation}
\label{unknown}
  \varrho(P) = \mathfrak{J} \mathfrak{S} P^{s-2r} + O(P^{s-2r-\delta})
\end{equation}
holds true, where $\mathfrak{J}$ and $\mathfrak{S}$
are the singular integral and the singular series, respectively.
\end{theorem}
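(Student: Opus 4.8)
The plan is to obtain Theorem \ref{see} as an immediate consequence of Theorem \ref{stammtisch}, the only point being the elementary dictionary, valid in degree $d=2$, between the rank of a quadratic form and the dimension of the associated variety $V_\mathbf{a}^*$. Write $G_\mathbf{a} = a_1 F_1 + \ldots + a_r F_r$ for $\mathbf{a} \in \zet^r \backslash \{\mathbf{0}\}$, and let $M_\mathbf{a}$ be the symmetric matrix with rational entries for which $G_\mathbf{a}(\mathbf{x}) = \mathbf{x}^T M_\mathbf{a} \mathbf{x}$. Then $\nabla G_\mathbf{a}(\mathbf{x}) = 2 M_\mathbf{a} \mathbf{x}$, so that
\[
  V_\mathbf{a}^* = \{\mathbf{x} \in \ce^s : M_\mathbf{a} \mathbf{x} = \mathbf{0}\}
\]
is just the kernel of $M_\mathbf{a}$ over $\ce$, whence $\dim V_\mathbf{a}^* = s - \operatorname{rank}_\ce M_\mathbf{a}$. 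Since $M_\mathbf{a}$ has rational entries, its rank over $\ce$ coincides with its rank over $\qu$, which in turn is the rank of the quadratic form $G_\mathbf{a}$; thus $\dim V_\mathbf{a}^* = s - \operatorname{rank} G_\mathbf{a}$.

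First I would translate the hypothesis of Theorem \ref{see} into this language. Every form in the rational pencil of $F_1, \ldots, F_r$ is a non-zero rational multiple of some $G_\mathbf{a}$ with primitive $\mathbf{a} \in \zet^r \backslash \{\mathbf{0}\}$, and rescaling leaves the rank unchanged; so the assumption that each such form has rank exceeding $2r^2+2r$ is equivalent to
\[
  \operatorname{rank} G_\mathbf{a} \ge 2r^2 + 2r + 1
  \qquad (\mathbf{a} \in \zet^r \backslash \{\mathbf{0}\}),
\]
which by the dictionary above is the same as $\dim V_\mathbf{a}^* \le s - 2r^2 - 2r - 1$ for all such $\mathbf{a}$, that is,
\[
  \max_{\mathbf{a} \in \zet^r \backslash \{\mathbf{0}\}} \dim V_\mathbf{a}^*
  < s - (2r^2 + 2r).
\]

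It then remains only to observe that for $d = 2$ one has $r(r+1)(d-1) 2^{d-1} = 2r(r+1) = 2r^2 + 2r$, so that the inequality just obtained is precisely the hypothesis
\[
  s > \max_{\mathbf{a} \in \zet^r \backslash \{\mathbf{0}\}} \dim V_\mathbf{a}^* + r(r+1)(d-1) 2^{d-1}
\]
of Theorem \ref{stammtisch}. Applying that theorem with $d=2$ yields the asymptotic formula $\varrho(P) = \mathfrak{J} \mathfrak{S} P^{s-2r} + O(P^{s-2r-\delta})$, as claimed. In this argument there is no genuine obstacle: the entire arithmetic content sits inside Theorem \ref{stammtisch} (and, through it, inside the variant of Weyl's inequality announced in the abstract), and the only slightly delicate points — that the rational and integral pencils exhibit the same ranks, and that the rank of a rational matrix is insensitive to extending scalars to $\ce$ — are routine. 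It is worth stressing why this recovers the sharper bound $2r^2+2r$ in place of Schmidt's $2r^2+3r$: Birch's original condition is phrased through $\dim V^*$, which implicitly involves the complex pencil and may be strictly larger than $\max_\mathbf{a} \dim V_\mathbf{a}^*$, whereas Theorem \ref{stammtisch} only refers to the rational pencil, matching exactly the rank hypothesis under consideration.
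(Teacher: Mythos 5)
Your proof is correct, and it takes a genuinely different route from the paper. You deduce Theorem \ref{see} from Theorem \ref{stammtisch} via the dictionary $\dim V_\mathbf{a}^* = s - \operatorname{rank}(a_1F_1+\ldots+a_rF_r)$, which is valid: for a quadratic form the variety $V_\mathbf{a}^*$ is the kernel of the associated symmetric matrix, rank is insensitive to extension of scalars and to nonzero rational rescaling, and for $d=2$ the Birch-type term $r(r+1)(d-1)2^{d-1}$ equals $2r^2+2r$, so the rank hypothesis translates exactly into the hypothesis of Theorem \ref{stammtisch}. The paper instead does not pass through Theorem \ref{stammtisch} at all: it excludes alternative (iii) of Lemma \ref{booker} directly from the rank condition (via the trivial counting Lemma \ref{longport}), obtaining Lemma \ref{greenwhich} and its Corollary --- a Weyl-inequality dichotomy that is essentially Lemma 6 of \cite{S1} with a slightly stronger exponent under the weaker rank condition --- and then completes the proof by following Schmidt's argument in \cite{S1} rather than Birch's. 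The two routes are closely related, since for $d=2$ one has $\operatorname{codim} V_\mathbf{a}^* = \operatorname{rank}(a_1F_1+\ldots+a_rF_r)$, so the bound used in Lemma \ref{neujahr} coincides with that of Lemma \ref{longport}; your reduction is shorter and makes the relation between the rank hypothesis and Birch's singular-locus condition transparent, while the paper's version yields the explicit minor-arc estimate in the Corollary following Lemma \ref{greenwhich}, which is of independent interest and plugs directly into Schmidt's machinery (and is the template generalised to $d=3$ and to higher degree in the later sections).
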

In particular, under the rank condition of Theorem \ref{see},
a system $F_1(\mathbf{x}) = \ldots = F_r(\mathbf{x}) = 0$ of rational
quadratic forms satisfies the Local-Global principle. If one imposes further
conditions, one can show that $\mathfrak{J}$ and $\mathfrak{S}$ are positive and
concludes that there are non-trivial rational zeros. The following result
is along these lines, improving a $2r^2+3r$ which has previously
been known (see Theorem 1 in \cite{D}) to $2r^2+2r$.
\begin{corollary}
\label{akw}
Let $F_1, \ldots, F_r$ be rational quadratic forms. Suppose that each form in
the complex pencil of $F_1, \ldots, F_r$ has rank exceeding $2r^2+2r$, and
suppose that the system $F_1=\ldots=F_r=0$ has a non-singular real zero.
Then the asymptotic formula (\ref{unknown}) holds true, and the singular
integral $\mathfrak{J}$ and the singular series $\mathfrak{S}$ are positive.
In particular, the intersection $F_1=\ldots=F_r=0$ has a non-trivial rational
zero.
\end{corollary}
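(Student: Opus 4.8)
The plan is to obtain the asymptotic formula~\eqref{unknown} from Theorem~\ref{see} and then to establish, for a suitable box $\be$, that the singular integral $\mathfrak{J}$ and the singular series $\mathfrak{S}$ are positive; since $s>2r^2+2r\ge 2r$ this yields $\varrho(P)\sim\mathfrak{J}\mathfrak{S}P^{s-2r}\to\infty$, hence infinitely many integer points of $F_1=\ldots=F_r=0$ in $P\be$, and in particular a non-trivial rational zero. To invoke Theorem~\ref{see} one only has to note that its hypothesis is weaker than the one assumed here: the rational pencil of $F_1,\ldots,F_r$ is contained in their complex pencil, and $\operatorname{rank}(a_1F_1+\ldots+a_rF_r)$ is unchanged under extension of the coefficient field, so each form in the rational pencil has rank exceeding $2r^2+2r$.

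In fact the complex pencil hypothesis is considerably more robust than that, and this is what makes the local analysis go through. Writing $M_i$ for the coefficient matrix of $F_i$, the projective Nullstellensatz shows that the statement ``every form in the complex pencil has rank exceeding $2r^2+2r$'' is equivalent to the assertion that the $(2r^2+2r+1)\times(2r^2+2r+1)$ minors of $a_1M_1+\ldots+a_rM_r$, regarded as forms in $\mathbf{a}$ over $\qu$, have only the trivial common zero; this is an ideal-theoretic identity over $\qu$ and therefore persists over $\er$ and over every $\qu_p$. Consequently every form in the real pencil, and every form in each $\qu_p$-pencil, of $F_1,\ldots,F_r$ also has rank exceeding $2r^2+2r$. (It is precisely here that the complex, rather than merely the rational, pencil is needed: the $\qu_p$-pencil is strictly larger than the rational one, so the rational hypothesis of Theorem~\ref{see} by itself would not control the local factors.)

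For the positivity of $\mathfrak{J}$ we use the postulated non-singular real zero $\mathbf{x}_0$; since the origin is always a singular point of an intersection of quadrics, $\mathbf{x}_0\ne\mathbf{0}$, and by homogeneity we may rescale $\mathbf{x}_0$ into the interior of the unit box and take $\be$ to be a small closed box avoiding $\mathbf{0}$ whose interior contains $\mathbf{x}_0$. As the Jacobian of $(F_1,\ldots,F_r)$ has rank $r$ at $\mathbf{x}_0$, the real variety $F_1=\ldots=F_r=0$ is near $\mathbf{x}_0$ a smooth $(s-r)$-dimensional manifold meeting the interior of $\be$, and the standard evaluation of the singular integral as the corresponding surface measure (as in \cite{B} or \cite{Schmidt}) gives $\mathfrak{J}=\mathfrak{J}(\be)>0$. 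For $\mathfrak{S}=\prod_p\chi_p$, the estimates underlying Theorem~\ref{see} give $\chi_p=1+O(p^{-1-\eta})$ for all but finitely many $p$, so the product converges absolutely and $\mathfrak{S}>0$ precisely when every $\chi_p>0$, that is, when the system has a non-singular $\qu_p$-zero for each $p$. By the second paragraph every form in the $\qu_p$-pencil has rank exceeding $2r^2+2r$; in particular $s>2r^2+2r$, so the $p$-adic theory of systems of quadratic forms over a field of $u$-invariant $4$ supplies a common $\qu_p$-zero, and this is upgraded to a non-singular common $\qu_p$-zero by descent on $r$: the case $r=1$ is the fact that a $\qu_p$-form of rank at least $5$ has a smooth zero, and for $r>1$ one restricts the system to a suitable $\qu_p$-subspace on which the pencil-rank bound $2(r-1)^2+2(r-1)$ for $r-1$ forms still holds --- the affordable loss being exactly $\bigl(2r^2+2r\bigr)-\bigl(2(r-1)^2+2(r-1)\bigr)=4r$ --- and then checks that the point furnished by the inductive hypothesis remains non-singular for the full system, as in \cite{D}.

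The hard part is this last $p$-adic descent under the sharp bound $2r^2+2r$ rather than the $2r^2+3r$ available from \cite{D}: at each step one must simultaneously keep the rank of every form in the restricted pencil above the required threshold and control the degeneracy of the gradients of the restricted forms, and it is this bookkeeping that pins down the constant $2r^2+2r$. Everything else is either Theorem~\ref{see} or a routine property of the singular integral and singular series. Combining the three ingredients, \eqref{unknown} holds with $\mathfrak{J}\mathfrak{S}>0$, whence $\varrho(P)\to\infty$ and the intersection $F_1=\ldots=F_r=0$ has a non-trivial rational, indeed integral, zero.
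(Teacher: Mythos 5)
Your proposal follows essentially the same route as the paper's proof: reduce the rational-pencil hypothesis of Theorem \ref{see} to the complex-pencil one, observe that the complex pencil condition persists over $\er$ and over every $\qu_p$, obtain $\mathfrak{J}>0$ from the non-singular real zero via a suitable choice of box, and obtain $\mathfrak{S}>0$ from non-singular $\qu_p$-zeros. One correction to your framing of the last step: the paper treats the $p$-adic positivity as a pure citation --- the argument on p.~510 of \cite{D} already yields $\mathfrak{S}>0$ under the pencil-rank bound $2r^2+2r$ --- so there is no ``hard'' local descent that has to be redone at the sharper constant; the improvement from $2r^2+3r$ to $2r^2+2r$ in this corollary comes entirely from the global analytic input of Theorem \ref{see} (the new Weyl-type inequality), not from any sharpening of the local solubility argument.
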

\begin{proof}
Since each form in the complex pencil of $F_1, \ldots, F_r$ has rank
exceeding $2r^2+2r$, also each form in the rational pencil of $F_1, \ldots,
F_r$ has rank exceeding $2r^2+2r$. Hence, by Theorem \ref{see}, the
asymptotic formula (\ref{unknown}) holds true. Moreover, for the same
reason (see \cite{D} or \cite{S1} for details), each form in any
$\qu_p$-rational pencil of $F_1, \ldots, F_r$ has rank exceeding
$2r^2+2r$ as well.
As shown in \cite{D}, p. 510, this implies that $\mathfrak{S}>0$.
Finally, the assumption on non-singular real solubility of $F_1=\ldots
=F_r=0$ implies that $\mathfrak{J}>0$.
\end{proof}
Since a singular rational quadratic form trivially has a non-trivial rational
zero, Corollary \ref{akw} for $r=1$ recovers a classical result by Meyer
\cite{M}: Any indefinite rational quadratic form in at least five variables
has a non-trivial rational zero. Much more than Corollary \ref{akw}
is known for $r=2$ (see \cite{C}, \cite{HB}),
but for $r>2$ our result seems to be the strongest available at
present.\\
To state our result for systems of cubic forms, we first have to introduce
the following terminology: If $C(X_1, \ldots, X_s) \in \qu[X_1, \ldots, X_s]$
is a rational cubic form, then its $h$-\textit{invariant} $h(C)$ is the
smallest non-negative integer $h$ such that $C$ can be written in the form
\[
  C(X_1, \ldots, X_s) = \sum_{i=1}^h L_i(X_1, \ldots, X_s)
  Q_i(X_1, \ldots, X_s)
\]
for suitable linear forms $L_i \in \qu[X_1, \ldots, X_s]$
and quadratic forms $Q_i \in \qu[X_1, \ldots, X_s]$.
One can think of the $h$-invariant as some way of generalising rank from
quadratic to cubic forms. For a system of $r$ rational cubic forms,
Schmidt (\cite{S2}, Theorem 2)
has shown that an asymptotic formula for the number of
integer zeros in an expanding region holds true, provided that each form in
the rational pencil of $C_1, \ldots, C_r$ has $h$-invariant exceeding
$10r^2+6r$. Again, Birch's condition (\ref{virginia_water}) for $d=3$
suggests that a weaker bound, namely $8r^2+8r$ could suffice, which indeed
is true.
\begin{theorem}
\label{rumpsteak}
Let $F_1, \ldots, F_r \in \zet[X_1, \ldots, X_s]$ be cubic forms,
such that each form in their rational pencil has $h$-invariant
exceeding $8r^2+8r$.
Then if $\be$ is a box in $\er^s$ with sides parallel to the coordinate
axes, and contained in the unit box, then there exists a positive $\delta$
such that for the quantity
\[
  \varrho(P) := \#\{\mathbf{x} \in \zet^s:\mathbf{x} \in P\be \mbox{ and }
  F_i(\mathbf{x}) = 0 \; (1 \le i \le r)\}
\]
the asymptotic formula
\[
  \varrho(P) = \mathfrak{J} \mathfrak{S} P^{s-3r} + O(P^{s-3r-\delta})
\]
holds true, where $\mathfrak{J}$ and $\mathfrak{S}$
are the singular integral and the singular series, respectively.
\end{theorem}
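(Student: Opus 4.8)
The plan is to deduce Theorem~\ref{rumpsteak} from Theorem~\ref{stammtisch} with $d=3$, which requires translating the hypothesis on $h$-invariants into the bound on the dimensions of the singular loci $V_\mathbf{a}^*$ appearing there. Since $r(r+1)(d-1)2^{d-1}=8r^2+8r$ when $d=3$, it suffices to show that under the hypothesis of Theorem~\ref{rumpsteak} one has
\[
  s>\dim V_\mathbf{a}^*+8r^2+8r\qquad\text{for every }\mathbf{a}\in\zet^r\setminus\{\mathbf{0}\},
\]
where $V_\mathbf{a}^*$ is the affine singular variety of the cubic form $C_\mathbf{a}:=a_1F_1+\ldots+a_rF_r$; note that the hypothesis rules out $C_\mathbf{a}\equiv 0$ for $\mathbf{a}\neq\mathbf{0}$, since the zero form has $h$-invariant $0$. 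Granting the displayed inequality, Theorem~\ref{stammtisch} yields precisely the asymptotic formula $\varrho(P)=\mathfrak{J}\mathfrak{S}P^{s-3r}+O(P^{s-3r-\delta})$ with the stated singular integral and singular series. Everything therefore reduces to the following assertion about a single rational cubic form, applied to each $C=C_\mathbf{a}$: if $C\in\qu[X_1,\ldots,X_s]$ is a cubic form then $h(C)\le s-\dim V^*(C)$, where $V^*(C)=\{\mathbf{x}\in\ce^s:\nabla C(\mathbf{x})=\mathbf{0}\}$.

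To prove this comparison I would first record the description of the $h$-invariant by linear subspaces: for a cubic form $C$,
\[
  h(C)=s-\max\{\dim W:\ W\subseteq\ce^s\text{ is a }\qu\text{-rational linear subspace with }C|_W\equiv 0\}.
\]
Indeed, to write $C=\sum_{i=1}^{h}L_iQ_i$ with linearly independent $\qu$-rational linear forms $L_i$ and quadratic forms $Q_i$ is the same as to require $C$ to lie in the ideal $(L_1,\ldots,L_h)$, i.e.\ to vanish on the $\qu$-rational linear subspace $\{L_1=\ldots=L_h=0\}$ of dimension $s-h$; comparing homogeneous degrees shows one may take the cofactors to be quadratic forms. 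Hence $h(C)\le s-\dim V^*(C)$ becomes the geometric statement that every rational cubic form vanishes identically on some $\qu$-rational linear subspace of dimension at least $\dim V^*(C)$. By Euler's identity $3C=\sum_j X_j(\partial C/\partial X_j)$ one has $V^*(C)\subseteq\{C=0\}$, so $V^*(C)$ is the singular locus of the cubic hypersurface $\{C=0\}$.

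Finally I would establish this geometric statement by induction on $s$. If $\dim V^*(C)=0$, the subspace $\{\mathbf{0}\}$ works. If $\dim V^*(C)\ge 1$, I would pick a suitable rational singular point $\mathbf{y}\neq\mathbf{0}$ of $C$, normalise it to $\mathbf{e}_s$ by a $\qu$-rational change of variables — the vanishing of $\nabla C$ at $\mathbf{e}_s$ then forcing $C=X_sQ(X_1,\ldots,X_{s-1})+C'(X_1,\ldots,X_{s-1})$ with $C'$ cubic and $Q$ quadratic in $s-1$ variables — and recurse. I expect the bookkeeping in this recursion to be the main, and essentially the only, obstacle: the naive step that passes from a good subspace $W'$ for $C'$ to $W'\times\{0\}$ for $C$ only yields $h(C)\le s-\dim V^*(C')$, which is too weak exactly when $\dim V^*(C)$ exceeds $\dim V^*(C')$, so one has to recover that possible jump by enlarging the subspace in the $X_s$-direction; this forces a subspace on which both $Q$ and $C'$ vanish, and hence a case distinction according to whether $Q$ vanishes on the singular locus of $C'$, together with checking that the needed singular point can at each stage be taken over $\qu$. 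No further input from the circle method is required beyond Theorem~\ref{stammtisch}; the single-cubic comparison $h(C)\le s-\dim V^*(C)$ is classical and is what makes the one-cubic results of Birch~\cite{B} and of Davenport consistent (see also \cite{S2}).
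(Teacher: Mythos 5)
Your reduction of Theorem~\ref{rumpsteak} to Theorem~\ref{stammtisch} rests entirely on the comparison $h(C)\le s-\dim V^*(C)$ for a single rational cubic form, and this comparison is false (it is not a classical fact). Take $C(X_1,X_2,X_3)=N_{K/\qu}(X_1\omega_1+X_2\omega_2+X_3\omega_3)$, the norm form of a cubic number field $K$ with $\qu$-basis $\omega_1,\omega_2,\omega_3$. Over $\ce$ one has $C=L_1L_2L_3$ with three linearly independent conjugate linear forms, so $V^*(C)$ is the union of the three lines $\{L_i=L_j=0\}$, $i\ne j$, and $\dim V^*(C)=1$; but $C$ is anisotropic over $\qu$, so it vanishes on no $\qu$-rational linear subspace of positive dimension, and by your own (correct) subspace characterisation $h(C)=3>2=s-\dim V^*(C)$. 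The breakdown is exactly at the point you flag as needing care, namely ``checking that the needed singular point can at each stage be taken over $\qu$'': the singular points of $C_{\mathbf a}$ need not be rational, while the $h$-invariant is a $\qu$-rational quantity and $V_{\mathbf a}^*$ a complex variety. The only general comparison goes the other way: if $C=\sum_{i=1}^{h}L_iQ_i$ then $\nabla C$ vanishes on $\{L_1=\ldots=L_h=Q_1=\ldots=Q_h=0\}$, whence $\dim V^*(C)\ge s-2h$, i.e.\ $h(C)\ge\frac12\operatorname{codim}V^*(C)$ --- useless for your purposes. So the hypothesis of Theorem~\ref{rumpsteak} cannot be converted into that of Theorem~\ref{stammtisch}; the two conditions are genuinely incomparable, which is the very reason the $h$-invariant theorems exist alongside Birch's.

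The paper does not route through Theorem~\ref{stammtisch}. It keeps the circle-method skeleton but replaces the geometric input: when alternative (iii) of Lemma~\ref{booker} holds for some $\mathbf a\ne\mathbf 0$, the forms $\Phi_j(\mathbf a;\cdot,\cdot)$ are identified with $6B_j$, the bilinear forms attached to $C=\sum a_iF_i$, and the Davenport--Lewis bound of Lemma~\ref{mappus}, $\ll H^{2s-h(C)}$, is used in place of Lemma~\ref{neujahr}. Since $h(C)>8r^2+8r$ by the pencil hypothesis, this contradicts (iii) for a suitable $k$, giving the Weyl inequality of Lemma~\ref{schiff} and Corollary~\ref{windsor_farm_shop}, after which one follows Schmidt \cite{S2}. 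If you want to keep your architecture, what you must supply is precisely such an arithmetic substitute for Lemma~\ref{neujahr} valid under an $h$-invariant hypothesis; the subspace characterisation of $h$ alone will not produce it.
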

Not surprisingly, the proofs of Theorem \ref{stammtisch},
Theorem \ref{see} and Theorem \ref{rumpsteak}
make use of the Hardy-Littlewood circle method.
Our main innovation is a variant of Birch's form of Weyl's inequality
for general systems of forms which turns out to be more useful in the specific
situations encountered here, where one knows a certain lower rank
(or dimension of singular locus) bound for
all forms in the rational pencil of the given ones.\\
One might ask if our method also provides an analogue
of Theorem \ref{see} and Theorem \ref{rumpsteak}
for systems of rational forms of
degree $d \ge 4$. This is indeed the case, and the treatment is analogous
to that for $d=3$, so we only give a brief sketch:
The concept of an $h$-invariant generalises from cubic forms to higher
degree forms, see Schmidt's seminal
work \cite{Schmidt}, p. 245. Using one of the key results in \cite{Schmidt},
one can show that if
$F_1, \ldots, F_r \in \zet[X_1, \ldots, X_s]$ are forms of degree $d$,
and each form in their rational pencil has $h$-invariant exceeding
\begin{equation}
\label{oktober}
  \varphi(d) (d-1) 2^{d-1} r(r+1),
\end{equation}
where $\varphi(2)=\varphi(3)=1$, $\varphi(4)=3$, $\varphi(5)=13$, and
$\varphi(d)<(\log 2)^{-d} d!$ in general, then an asymptotic formula for
the number of integer zeros of the system $F_1=\ldots=F_r=0$ in an
expanding box of size $P$ of the form
\[
  \mathfrak{J} \mathfrak{S} P^{s-rd} + O(P^{s-rd-\delta})
\]
holds true, parallelising Theorems \ref{see} and \ref{rumpsteak}
(note that also Theorem II in \cite{Schmidt} deals with generic systems of
forms of not necessarily the same degrees and could be specialised
to the setting here, but is not optimised to our situation of all forms
having the same degree $d$).
However, for $d \ge 4$ the bound \eqref{oktober} no longer parallels the
corresponding term $r(r+1)(d-1)2^{d-1}$ in Birch's result
\eqref{virginia_water} as $\varphi(d)>1$ for $d \ge 4$.\\ \\
\textbf{Acknowledgment:} The author would like to thank the referees
for carefully reading this paper.\\
Since this paper was submitted for publication,
Schindler \cite{Schindler}
has posted a preprint which independently proves Theorem \ref{stammtisch}
and the result described around equation \eqref{oktober} above
which is proved in section \ref{thursday}.
\section{Weyl's inequality}
\label{chile}
Let $F_1, \ldots, F_r \in \zet[X_1, \ldots, X_s]$ be forms of degree
$d \ge 2$.
We can write $F_i$ in the form
\[
  F_i(\mathbf{x}) = \sum_{1 \le j_1, \ldots, j_d \le s}
  c_{j_1, \ldots, j_d}^{(i)} x_{j_1} \cdots x_{j_d} \;\;\; (1 \le i \le r),
\]
and for the purpose of studying the system $F_1 = \ldots = F_r = 0$
we may without loss of generality assume that the coefficients
$c_{j_1, \ldots, j_d}^{(i)}$ are symmetric in $j_1, \ldots, j_d$.
Let $\be$ be an $s$-dimensional box with sides parallel to the coordinate
axes, and
for $\alf \in \er^r$ let $S(\mathbf{\alf})$ be the exponential sum
\[
  S(\alf) = \sum_{\mathbf{x} \in P\be} e(\alpha_1 F_1(\mathbf{x}) +
  \ldots + \alpha_r F_r(\mathbf{x})).
\]
Moreover, for $\alf \in \er^r$, $\mathbf{x}^{(1)}, \ldots,
\mathbf{x}^{(d-1)} \in \zet^s$ and $j \in \{1, \ldots, s\}$ let
\begin{equation}
\label{bookie}
  \Phi_j(\mathbf{\alf}; \mathbf{x}^{(1)}, \ldots, \mathbf{x}^{(d-1)}) =
  \sum_{i=1}^r \alpha_i \Psi_j^{(i)}(\mathbf{x}^{(1)}, \ldots,
  \mathbf{x}^{(d-1)})
\end{equation}
where
\[
  \Psi_j^{(i)}(\mathbf{x}^{(1)}, \ldots, \mathbf{x}^{(d-1)}) =
  d! \sum_{1 \le j_1, \ldots, j_{d-1} \le s}
  c^{(i)}_{j, j_1, \ldots, j_{d-1}} x_{j_1}^{(1)} \cdots x_{j_{d-1}}^{(d-1)}.
\]
Finally, let
\begin{eqnarray}
\label{gerrard}
  N(P;Q;\alf) & = & \#\{\mathbf{x}^{(1)}, \ldots, \mathbf{x}^{(d-1)}
  \in \zet^s: \mathbf{x}^{(i)} \in P\be \; (1 \le i \le d-1) \\
  & & \mbox{and } ||\Phi_j(\alf; \mathbf{x}^{(1)}, \ldots,
  \mathbf{x}^{(d-1)})|| < Q \; (1 \le j \le s)\}, \nonumber
\end{eqnarray}
where $||\cdot||$ as usual denotes distance to the nearest integer.
In the following, all implied $O$-constants depend at most on
$s, r, d, F_1, \ldots, F_r, \be$ and a chosen small positive $\varepsilon$.
\begin{lemma}
\label{england}
Let $0 < \theta \le 1$, $\varepsilon>0$ and $k>0$. Then if
\[
  S(\mathbf{\alf}) > P^{s-k},
\]
then
\begin{equation}
\label{rooney}
  N(P^{\theta};P^{-d+(d-1)\theta};\alf) \gg P^{(d-1)s\theta-2^{d-1}k-
  \varepsilon}.
\end{equation}
\end{lemma}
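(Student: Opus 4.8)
The plan is to run the classical two-stage argument behind Weyl's inequality for systems of forms, essentially as in Birch \cite{B}: a Weyl differencing stage which converts the largeness of $S(\alf)$ into the statement that the multilinear forms $\Phi_j(\alf;\,\cdot\,)$ take many small values, followed by a geometry-of-numbers stage which reshapes this into the count $N$ with the precise box side and threshold occurring in \eqref{rooney}.

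For the differencing stage I would write $g(\mathbf{x})=\alpha_1F_1(\mathbf{x})+\ldots+\alpha_rF_r(\mathbf{x})$, a real form of degree $d$, and, in order to bring in the scale $P^\theta$ directly, dissect $P\be$ into $\asymp P^{(1-\theta)s}$ sub-boxes $\mathcal{C}$ of side $\asymp P^\theta$, so that $S(\alf)=\sum_{\mathcal{C}}T_{\mathcal{C}}(\alf)$ with $T_{\mathcal{C}}(\alf)=\sum_{\mathbf{x}\in\mathcal{C}}e(g(\mathbf{x}))$; H\"older's inequality then bounds $|S(\alf)|^{2^{d-1}}$ by $P^{(1-\theta)s(2^{d-1}-1)}\sum_{\mathcal{C}}|T_{\mathcal{C}}(\alf)|^{2^{d-1}}$. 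On each $T_{\mathcal{C}}$ I would perform $d-1$ successive Weyl differencings in difference vectors $\mathbf{x}^{(1)},\ldots,\mathbf{x}^{(d-1)}$ of size $\ll P^\theta$. The crucial point is that the $(d-1)$-fold difference of $g$ is a polynomial of degree $\le 1$ in the surviving variable whose coefficient of $x_j$ is precisely $\Phi_j(\alf;\mathbf{x}^{(1)},\ldots,\mathbf{x}^{(d-1)})$ --- this is exactly what the normalisation by $d!$ in the $\Psi_j^{(i)}$ secures --- and that this coefficient does not depend on $\mathcal{C}$: only the accompanying additive constant does, and that is irrelevant to the modulus of the resulting linear exponential sum. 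Estimating that sum by $\prod_{j=1}^s\min(P^\theta,\|\Phi_j(\alf;\mathbf{x}^{(1)},\ldots,\mathbf{x}^{(d-1)})\|^{-1})$ and collecting powers of $P$ (the differencing contributes the familiar Weyl exponent $P^{\theta(2^{d-1}-d)s}$ per sub-box), the hypothesis $S(\alf)>P^{s-k}$ yields
\[
  \sum_{\mathbf{x}^{(1)},\ldots,\mathbf{x}^{(d-1)}}\ \prod_{j=1}^s\min\bigl(P^\theta,\|\Phi_j(\alf;\mathbf{x}^{(1)},\ldots,\mathbf{x}^{(d-1)})\|^{-1}\bigr)\ \gg\ P^{\theta ds-2^{d-1}k},
\]
the sum running over $\mathbf{x}^{(1)},\ldots,\mathbf{x}^{(d-1)}\in\zet^s$ with $|\mathbf{x}^{(i)}|\ll P^\theta$ for $1\le i\le d-1$.

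For the geometry-of-numbers stage I would split each factor $\min(P^\theta,\|\,\cdot\,\|^{-1})$ dyadically and apply the pigeonhole principle over the $\ll P^\varepsilon$ resulting choices to isolate a single vector threshold $\boldsymbol{\beta}=(\beta_1,\ldots,\beta_s)$ with $P^{-\theta}\ll\beta_j\le1$ for which
\[
  \Bigl(\prod_{j=1}^s\beta_j^{-1}\Bigr)\#\bigl\{\mathbf{x}^{(i)}\in\zet^s:|\mathbf{x}^{(i)}|\ll P^\theta,\ \|\Phi_j(\alf;\mathbf{x}^{(1)},\ldots,\mathbf{x}^{(d-1)})\|<\beta_j\ (1\le j\le s)\bigr\}\ \gg\ P^{\theta ds-2^{d-1}k-\varepsilon}.
\]
Since $\prod_j\beta_j^{-1}\ll P^{\theta s}$, this already furnishes the count attached to $\boldsymbol{\beta}$ with a lower bound of the correct order $P^{\theta(d-1)s-2^{d-1}k-\varepsilon}$; what remains is to replace the coarse, non-uniform threshold $\boldsymbol{\beta}$ by the uniform threshold $P^{-d+(d-1)\theta}$ of \eqref{rooney}. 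For this I would appeal to the convexity lemma from the geometry of numbers underlying Birch's argument in \cite{B} (going back to Davenport's treatment of Weyl's inequality): since the $\Phi_j$ are multilinear --- hence linear in each of the groups of variables $\mathbf{x}^{(i)}$ separately --- one can control how the lattice-point count for the region $\|\Phi_j(\alf;\,\cdot\,)\|<\beta_j$ changes as the thresholds and the side of the box are varied, and thereby deduce \eqref{rooney} after a harmless adjustment of $\varepsilon$.

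I expect this last, geometry-of-numbers, step to be the main obstacle. What has to be controlled is the behaviour of the count once the threshold is pushed below the reciprocal $P^{-\theta}$ of the box side --- and the target threshold $P^{-d+(d-1)\theta}$ does lie strictly below $P^{-\theta}$ as soon as $\theta<1$ --- so a crude comparison of level sets loses too much, and the rescue has to come from the low-degree multilinear structure of the $\Phi_j$, not from any property of the particular point $\alf$. The bookkeeping of powers of $P$ comes out exactly because the threshold $P^{-d+(d-1)\theta}$ equals $P^{-d}$ times the $(d-1)$-st power of the box side $P^\theta$, which is precisely the normalisation under which such counts transform predictably under the multilinear scaling.
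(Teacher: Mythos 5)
The paper itself offers no proof of this lemma: it is quoted verbatim as Lemma 2.4 of Birch \cite{B}, so what you are really being asked to reconstruct is Birch's argument. Your differencing stage is correct --- the dissection into sub-boxes of side $P^{\theta}$, the H\"older step, and the bookkeeping leading to $\sum_{\mathbf{x}^{(1)},\ldots,\mathbf{x}^{(d-1)}}\prod_j\min(P^{\theta},\|\Phi_j\|^{-1})\gg P^{\theta ds-2^{d-1}k}$ all check out, as does the observation that the top multilinear part is independent of the sub-box. The gap is in the final step, and it is structural rather than a matter of missing detail.

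By differencing inside boxes of side $P^{\theta}$ you only ever extract information about $\|\Phi_j\|$ at resolution $P^{-\theta}$: after the dyadic pigeonholing your thresholds satisfy $\beta_j\ge P^{-\theta}$, and you must descend to the uniform threshold $P^{-d+(d-1)\theta}$, smaller by the factor $P^{d(1-\theta)}$ in each coordinate, \emph{while keeping the box fixed at side $P^{\theta}$}. Davenport's lemma on symmetric systems of linear forms (Birch's Lemma 2.3), which is the convexity lemma you gesture at, cannot do this: it shrinks the box and the threshold \emph{in tandem} by a common factor $Z$ at a cost of $Z^{-n}$ in the count. Having fixed the box at $P^{\theta}$ from the outset, you have no room left to shrink; and even the most optimistic volume-ratio comparison between the level sets $\|\Phi_j\|<\beta_j$ and $\|\Phi_j\|<P^{-d+(d-1)\theta}$ leaves you short of \eqref{rooney} by a factor $P^{d(1-\theta)s}$. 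Birch's route is forced precisely by this: difference over the full box $P\be$, convert the product of minima into the clean count $N(P;P^{-1};\alf)\gg P^{(d-1)s-2^{d-1}k-\varepsilon}$ (the case $\theta=1$ of \eqref{rooney}), and only then apply Davenport's lemma $d-1$ times, once in each group of variables $\mathbf{x}^{(i)}$ (using that $\Phi_j$ is linear in each group with symmetric coefficient array). Each application trades the factor $P^{1-\theta}$ of box side for the same factor of threshold at a cost of $P^{(1-\theta)s}$ in the count, and after $d-1$ applications one lands exactly on box $P^{\theta}$, threshold $P^{-1-(d-1)(1-\theta)}=P^{-d+(d-1)\theta}$, and count $P^{(d-1)s\theta-2^{d-1}k-\varepsilon}$. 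You correctly flagged this step as the main obstacle, but your setup does not surmount it.
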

\begin{proof}
This is Lemma 2.4 in \cite{B}.
\end{proof}
Our next lemma can be thought of as a variant of Lemma 2.5 in \cite{B},
where alternative (iii) has been replaced by one more suitable for dealing
with systems of forms satisfying a certain pencil condition.
\begin{lemma}
\label{booker}
In the notation of Lemma \ref{england}, we either (i) have
\[
  S(\mathbf{\alf}) \ll P^{s-k},
\]
or (ii) there are integers $a_1, \ldots, a_r, q$ such that
\begin{eqnarray*}
  (a_1, \ldots, a_r, q)=1, & & \\
  |q\alpha_i-a_i| \ll P^{-d+r(d-1)\theta} \;\;\; (1 \le i \le r), & & \\
  1 \le q \ll P^{r(d-1)\theta}, & & \\
\end{eqnarray*}
or (iii) there are integers $a_1, \ldots, a_r$, not all zero, such that
\[
  M(a_1, \ldots, a_r; P^\theta)
  \gg (P^\theta)^{(d-1)s-2^{d-1}k/\theta-\varepsilon}
\]
where
\begin{eqnarray}
\label{odds}
  M(a_1, \ldots, a_r; H) & = & \#\{\mathbf{x}^{(1)}, \ldots,
  \mathbf{x}^{(d-1)} \in \zet^s: \mathbf{x}^{(i)} \in H\be \; (1 \le i
  \le d-1) \\
  & & \mbox{and } \Phi_j(\mathbf{a}; \mathbf{x}^{(1)}, \ldots,
  \mathbf{x}^{(d-1)}) = 0 \; (1 \le j \le s)\}. \nonumber
\end{eqnarray}
\end{lemma}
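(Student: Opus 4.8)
The plan is to mimic the classical dyadic pigeonhole argument that Birch uses to prove his Lemma 2.5, but to keep track of rational approximations \emph{simultaneously} for all the $\alpha_i$, so that in the ``minor arc'' alternative we obtain a single integer vector $\mathbf{a}=(a_1,\ldots,a_r)$ governing the counting function $M(\mathbf{a};\cdot)$ rather than the diophantine-approximation count $N(\cdot;\cdot;\alf)$. First I would invoke Lemma \ref{england}: assuming we are not in alternative (i), i.e.\ $S(\alf)\gg P^{s-k}$, we immediately get the lower bound \eqref{rooney} for $N(P^\theta;P^{-d+(d-1)\theta};\alf)$. So from now on $H=P^\theta$ is fixed and we work entirely inside the box $H\be$ with the linear forms $\Phi_j(\alf;\mathbf{x}^{(1)},\ldots,\mathbf{x}^{(d-1)})$.

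Next I would introduce, for each admissible tuple $(\mathbf{x}^{(1)},\ldots,\mathbf{x}^{(d-1)})$ counted by $N$, the real point $\boldsymbol{\beta}=(\Phi_1,\ldots,\Phi_s)\in\er^s$, which by construction lies within $P^{-d+(d-1)\theta}$ of $\zet^s$ in each coordinate. The key observation is that each $\Phi_j$ is itself a \emph{linear form in $\alf$} with integer coefficients $\Psi_j^{(i)}(\mathbf{x}^{(1)},\ldots,\mathbf{x}^{(d-1)})$ that are polynomially bounded in terms of $H$. I would then run a Davenport-style argument on the $r$ quantities $\alpha_i$: using a standard geometry-of-numbers / Dirichlet-box argument (exactly as in Birch, or as in Davenport's \emph{Analytic Methods}), one splits into the case where there \emph{exist} integers $q,a_1,\ldots,a_r$ with $(a_1,\ldots,a_r,q)=1$, $1\le q\ll P^{r(d-1)\theta}$ and $|q\alpha_i-a_i|\ll P^{-d+r(d-1)\theta}$ for all $i$ — this is alternative (ii) — and the complementary case. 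In the complementary case, the point is that the $q$ forced by simultaneous approximation must be large, which (via the usual argument bounding the number of lattice points a linear form can send into a short interval) forces many of the tuples to make \emph{each} $\Phi_j(\alf;\cdot)$ not merely close to an integer but actually close to a bounded integer, and then — after a further pigeonhole over which integer vector $(a_1,\ldots,a_r)$ arises as a common denominator-cleared value, combined with a rescaling/homogeneity (``shrinking'') argument à la Birch that passes from ``close to zero'' to ``equal to zero'' at the cost of a smaller box but preserves the exponent — we land in alternative (iii) with $M(a_1,\ldots,a_r;P^\theta)\gg (P^\theta)^{(d-1)s-2^{d-1}k/\theta-\varepsilon}$. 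The $(a_1,\ldots,a_r)$ are not all zero because a genuinely nonzero proportion of the $\Phi_j$-values is involved; the homogeneity of the $\Psi_j^{(i)}$ in the $\mathbf{x}^{(i)}$ is what lets the vanishing be arranged exactly rather than approximately.

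I expect the main obstacle to be the bookkeeping that separates alternative (ii) from alternative (iii): one must show that if no good simultaneous rational approximation $(q,\mathbf a)$ with small $q$ exists, then the count $N$ cannot be concentrated on tuples whose $\Phi_j$-values are spread out, and hence a positive proportion of the $N$-many tuples actually give integer (in fact bounded-integer, hence after scaling, zero) values of all $\Phi_j$ simultaneously with respect to a \emph{common} cleared coefficient vector $\mathbf a$. Getting the exponent to come out as $(d-1)s - 2^{d-1}k/\theta - \varepsilon$ rather than something weaker requires being careful that the pigeonholing over the (polynomially many) candidate vectors $\mathbf a$ and over the dyadic ranges only costs $P^\varepsilon$, and that the passage from ``$\Phi_j$ integer and $O(1)$'' to ``$\Phi_j=0$'' via homogeneity/shrinking does not lose more than an absolute constant in the box size — this is precisely the point where Birch's original argument is adapted, so I would follow his treatment of the shrinking lemma essentially verbatim, only replacing his single-form approximation step by the simultaneous one described above. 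Everything else (the explicit $O$-constant dependence on $s,r,d,F_1,\ldots,F_r,\be,\varepsilon$) is routine.
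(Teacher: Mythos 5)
Your starting point is right (assume (i) fails and apply Lemma \ref{england} to get \eqref{rooney}), but the mechanism you propose for extracting alternatives (ii) and (iii) from that lower bound is not the one that works, and as described it has a genuine gap. The paper's proof hinges on a single linear-algebra dichotomy: form the matrix $\Psi$ whose $r$ columns are indexed by $i$ and whose rows are the vectors $\bigl(\Psi_j^{(1)},\ldots,\Psi_j^{(r)}\bigr)$ as $j$ ranges over $\{1,\ldots,s\}$ and the tuples $(\mathbf{x}^{(1)},\ldots,\mathbf{x}^{(d-1)})$ range over those counted by $N$. If $\operatorname{rank}\Psi=r$, a nonsingular $r\times r$ submatrix $R$ with integer entries $\ll P^{\theta(d-1)}$ satisfies $R\alf=\mathbf{b}+\mathbf{c}$ with $\mathbf{b}\in\zet^r$ and $|c_i|\ll P^{-d+(d-1)\theta}$, and Cramer's rule with $q=\det R$ yields alternative (ii). If $\operatorname{rank}\Psi<r$, the columns admit a single integer dependence $\mathbf{a}\ne\mathbf{0}$, so $\Phi_j(\mathbf{a};\cdot)=0$ holds \emph{exactly} for every $j$ and every tuple counted by $N$; hence $M(\mathbf{a};P^\theta)\ge N\gg(P^\theta)^{(d-1)s-2^{d-1}k/\theta-\varepsilon}$ with no loss whatsoever, giving (iii).

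Your substitute for this dichotomy does not go through. First, pigeonholing over the candidate vectors $\mathbf{a}$ ``arising as common denominator-cleared values'' costs a factor equal to the number of candidates, which is a power of $P$ (the $\Psi_j^{(i)}$ have size up to $P^{\theta(d-1)}$), not $P^\varepsilon$; this would destroy the exponent in (iii). Second, the proposed ``shrinking argument passing from close to zero to equal to zero'' has no content here: for an integer vector $\mathbf{a}$ the quantities $\Phi_j(\mathbf{a};\cdot)$ are already integers, so the only issue is whether they vanish, and Birch's shrinking lemma addresses a different step of the method (reducing box sizes in $N$), not this one. Third, your derivation of (ii) is circular — you simply split on whether a good simultaneous approximation exists rather than producing one from the data — which is logically permissible but leaves all the weight on the complementary case, precisely where the argument is missing. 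The essential point you need, and do not have, is that failure of (ii) forces the \emph{global} matrix $\Psi$ to be rank-deficient, which hands you one vector $\mathbf{a}$ working simultaneously for all $N$-counted tuples.
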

\begin{proof}
Our proof is similar to that of Lemma 2.5 in \cite{B} (see also the
remark at the bottom of page 248 in \cite{B}).
Suppose that alternative (i) is false. Then by Lemma \ref{england},
the lower bound (\ref{rooney}) holds true. Let $\Psi$ be the matrix whose
elements are the numbers $\Psi_j^{(i)}(\mathbf{x}^{(1)}, \ldots,
\mathbf{x}^{(d-1)})$, where the column $i$ ranges from $1$ to $r$, and
the rows range over the Cartesian product of all possible choices for
$j \in \{1, \ldots, s\}$ times all possible choices for tuples
$\mathbf{x}^{(1)}, \ldots, \mathbf{x}^{(d-1)}$ counted by $N$ in
(\ref{rooney}).\\ \\
\textbf{Case I:} rank $\Psi=r$.\\ \\
Then $\Psi$ has a non-singular $r \times r$ submatrix $R$.
Each row of $R$ is of the form
\[
  \left( \Psi_j^{(1)}(\mathbf{x}^{(1)}, \ldots, \mathbf{x}^{(d-1)}),
  \ldots, \Psi_j^{(r)}(\mathbf{x}^{(1)}, \ldots, \mathbf{x}^{(d-1)}) \right)
\]
for suitable $j \in \{1, \ldots, s\}$ and $\mathbf{x}^{(1)}, \ldots,
\mathbf{x}^{(d-1)} \in P^\theta \be$ counted by $N$ in (\ref{rooney}).
In particular, all matrix elements $r_{\ell i}$ are integers and
\begin{equation}
\label{klose}
  |r_{\ell i}| \ll P^{\theta (d-1)}
\end{equation}
for all $i \in \{1, \ldots, r\}$ and for all $\ell$.
By (\ref{rooney}) and the definition (\ref{gerrard}) of $N$, we have
\begin{equation}
\label{podolski}
  R \left( \begin{array}{c} \alpha_1 \\ \vdots \\ \alpha_r \end{array} \right)
  = \left( \begin{array}{c} b_1+c_1 \\ \vdots \\ b_r+c_r \end{array}
  \right),
\end{equation}
where the $b_i$ are integers and $c_i \in \er$ such that
\begin{equation}
\label{uruguay}
  |c_i| \ll P^{-d+(d-1)\theta} \;\;\; (1 \le i \le r).
\end{equation}
Now let $q = \det R$. Then $q \ne 0$ and
\[
  |q| \ll P^{r(d-1)\theta}
\]
by (\ref{klose}). Let $\mathbf{a} \in \er^r$ be the solution of
\begin{equation}
\label{mueller}
  R \left( \begin{array}{c} a_1 \\ \vdots \\ a_r \end{array} \right)
  = q \left( \begin{array}{c} b_1 \\ \vdots \\ b_r \end{array} \right).
\end{equation}
By Cramer's rule, $\mathbf{a} \in \zet^r$. Now (\ref{podolski}) and
(\ref{mueller}) yield
\begin{equation}
\label{neuner}
  q \left( \begin{array}{c} \alpha_1 \\ \vdots \\ \alpha_r \end{array}
  \right) - \left( \begin{array}{c} a_1 \\ \vdots \\ a_r \end{array}
  \right) =
  q R^{-1} \left( \begin{array}{c} c_1 \\ \vdots \\ c_r \end{array} \right).
\end{equation}
Moreover, by (\ref{klose}) and Cramer's rule all elements in the matrix
$qR^{-1}$ are at most $O(P^{\theta(r-1)(d-1)})$. Hence (\ref{uruguay})
and (\ref{neuner}) give
\[
  |q\alpha_i-a_i| \ll P^{-d+r(d-1)\theta} \;\;\; (1 \le i \le r).
\]
Clearly, by multiplying with $-1$ if necessary we can ensure that $q \in \en$,
and by dividing through $(a_1, \ldots, a_r, q)$ we can achieve
$(a_1, \ldots, a_r, q)=1$ without affecting the quality of the
Diophantine approximations to $\alpha_1, \ldots, \alpha_r$. Hence (ii) is
true.\\ \\
\textbf{Case II:} rank $\Psi<r$.\\ \\
In this case, the columns of $\Psi$ must be linearly dependent.
Therefore, there exists $\mathbf{a} \in \zet^r \backslash\{\mathbf{0}\}$
such that
\[
  \sum_{i=1}^r a_i
  \Psi_j^{(i)}(\mathbf{x}^{(1)}, \ldots, \mathbf{x}^{(d-1)})=0
\]
for all  $j \in \{1, \ldots, s\}$ and for all $\mathbf{x}^{(1)}, \ldots,
\mathbf{x}^{(d-1)} \in \zet^s$ that are counted by $N$ in (\ref{rooney}).
By (\ref{bookie}), (\ref{rooney}) and (\ref{odds}) we immediately get
the conclusion in alternative (iii).
\end{proof}
\section{Birch's Theorem revisited}
The following result is along the lines of \S3 in \cite{B} and shows that
alternative (iii) in Lemma \ref{booker} can be excluded if the variety under
consideration is sufficiently non-singular.
\begin{lemma}
\label{neujahr}
Keeping the notation of sections \ref{mallorca} and \ref{chile},
let $\mathbf{a} \in \zet^r \backslash\{\mathbf{0}\}$. Then
\[
  M(a_1, \ldots, a_r; H) \ll H^{(d-1)s-
  \operatorname{codim}V_{\mathbf{a}}^*},
\]
where the implied $O$-constant does not depend on $\mathbf{a}$.
\end{lemma}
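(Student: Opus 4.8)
The plan is to bound $M(a_1,\ldots,a_r;H)$ by relating the system of equations $\Phi_j(\mathbf{a};\mathbf{x}^{(1)},\ldots,\mathbf{x}^{(d-1)})=0$ $(1\le j\le s)$ to the singular locus $V_{\mathbf{a}}^*$ of the single form $G_{\mathbf{a}}=a_1F_1+\ldots+a_rF_r$, exactly as Birch does in \S3 of \cite{B} but now for the fixed integral form $G_{\mathbf{a}}$ rather than implicitly for the whole pencil. The key observation is that $\Phi_j(\mathbf{a};\mathbf{x}^{(1)},\ldots,\mathbf{x}^{(d-1)})$ is (up to the constant $d!$) the multilinear form associated with $G_{\mathbf{a}}$ in its $j$-th variable; in particular, setting all the auxiliary vectors equal to one vector $\mathbf{x}$ recovers a constant multiple of $\partial G_{\mathbf{a}}/\partial x_j(\mathbf{x})$, so the locus where all $\Phi_j$ vanish identically in $\mathbf{x}^{(1)},\ldots,\mathbf{x}^{(d-1)}$ is governed by $V_{\mathbf{a}}^*=\{\mathbf{x}\in\ce^s:\nabla G_{\mathbf{a}}(\mathbf{x})=\mathbf{0}\}$.

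First I would recall the standard geometric input (Lemma 3.3 in \cite{B}, or the affine-dimension version in \cite{AM}): for a projective (or affine) variety $W\subseteq\ce^n$ defined over $\qu$, the number of integer points of $W$ in a box of side $H$ is $O(H^{\dim W})$, with the implied constant depending only on the degree and the number of defining equations, not on the particular coefficients. Here the relevant variety is
\[
  W_{\mathbf{a}}=\Bigl\{(\mathbf{x}^{(1)},\ldots,\mathbf{x}^{(d-1)})\in\ce^{(d-1)s}:
  \Phi_j(\mathbf{a};\mathbf{x}^{(1)},\ldots,\mathbf{x}^{(d-1)})=0\ (1\le j\le s)\Bigr\},
\]
which contains all the lattice points counted by $M(a_1,\ldots,a_r;H)$, so that $M(a_1,\ldots,a_r;H)\ll H^{\dim W_{\mathbf{a}}}$. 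It therefore suffices to prove the dimension bound $\dim W_{\mathbf{a}}\le (d-1)s-\operatorname{codim}V_{\mathbf{a}}^*$.

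The heart of the argument is this dimension count, and it is where I expect the main work to lie. One projects $W_{\mathbf{a}}$ onto the first factor, $(\mathbf{x}^{(1)},\ldots,\mathbf{x}^{(d-1)})\mapsto\mathbf{x}^{(1)}$, and estimates the dimensions of the fibres. For a fixed generic $\mathbf{x}^{(1)}$ the remaining equations $\Phi_j(\mathbf{a};\mathbf{x}^{(1)},\mathbf{x}^{(2)},\ldots,\mathbf{x}^{(d-1)})=0$ are, for each $j$, a form of degree $d-2$ in $(\mathbf{x}^{(2)},\ldots,\mathbf{x}^{(d-2)})$ that is linear in $\mathbf{x}^{(d-1)}$; iterating the multilinearity, the fibre dimension over $\mathbf{x}^{(1)}$ is controlled by how far $\mathbf{x}^{(1)}$ is from $V_{\mathbf{a}}^*$. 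Concretely: if $\mathbf{x}^{(1)}\notin V_{\mathbf{a}}^*$ then the linear map in $\mathbf{x}^{(d-1)}$ given by the matrix $\bigl(\Phi_j(\mathbf{a};\mathbf{x}^{(1)},\mathbf{e}_k)\bigr)$-type expression has rank at least $1$, and one peels off one variable; feeding the whole of $\ce^s\setminus V_{\mathbf{a}}^*$ as the base and $V_{\mathbf{a}}^*$ (of dimension $s-\operatorname{codim}V_{\mathbf{a}}^*$) as the exceptional set, an induction on $d$ — Birch carries this out via his Lemmas 3.1 and 3.2 for the form $G=G_{\mathbf{a}}$ — yields $\dim W_{\mathbf{a}}\le (d-1)s-\operatorname{codim}V_{\mathbf{a}}^*$. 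The only thing to check beyond Birch's \S3 is uniformity in $\mathbf{a}$: but $G_{\mathbf{a}}$ has degree $d$ and $s$ variables regardless of $\mathbf{a}$, and the relevant algebraic-geometry estimates (the dimension-growth bound, and the bounds on the number of components and their degrees) depend only on these discrete data and on $F_1,\ldots,F_r$, so the implied constant can indeed be taken independent of $\mathbf{a}$. I would state this uniformity explicitly and then simply cite Birch's inductive argument applied to $G_{\mathbf{a}}$, noting that the case $\mathbf{a}\ne\mathbf{0}$ is exactly the case $r=1$ of that argument with the single form $G_{\mathbf{a}}$ in place of Birch's $F$.
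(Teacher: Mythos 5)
Your overall skeleton matches the paper's: pass to the variety $W_{\mathbf{a}}\subseteq\ce^{(d-1)s}$ cut out by the $\Phi_j(\mathbf{a};\cdot)$, bound $M(a_1,\ldots,a_r;H)\ll H^{\dim W_{\mathbf{a}}}$ by the standard point count (uniform in $\mathbf{a}$ because the number, degrees and ambient dimension of the defining equations depend only on $s,r,d$), and reduce everything to the single form $G_{\mathbf{a}}=a_1F_1+\ldots+a_rF_r$. The uniformity discussion is correct and is exactly the point the paper needs.

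However, the heart of the lemma is the dimension bound $\dim W_{\mathbf{a}}\le (d-1)s-\operatorname{codim}V_{\mathbf{a}}^*$, and here your fibration sketch does not deliver the claim. Projecting to $\mathbf{x}^{(1)}$ and observing that for $\mathbf{x}^{(1)}\notin V_{\mathbf{a}}^*$ some $\Phi_j(\mathbf{a};\mathbf{x}^{(1)},\cdot)$ is not identically zero only shows the fibre is a proper subvariety, i.e.\ it peels off codimension $1$ per factor; to reach codimension $\operatorname{codim}V_{\mathbf{a}}^*$ you would need a quantitative statement about how the fibre dimension degenerates near $V_{\mathbf{a}}^*$, which you do not supply and which is not how Birch argues either. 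The actual proof (Birch's Lemma 3.2, reproduced in the paper) is a one-step application of the affine dimension theorem: intersect $W_{\mathbf{a}}$ with the diagonal $U=\{\mathbf{x}^{(1)}=\ldots=\mathbf{x}^{(d-1)}\}$, so that
\[
  \dim(W_{\mathbf{a}}\cap U)\ \ge\ \dim W_{\mathbf{a}}+\dim U-(d-1)s=\dim W_{\mathbf{a}}-(d-2)s,
\]
and note that any diagonal point $(\mathbf{x},\ldots,\mathbf{x})\in W_{\mathbf{a}}\cap U$ satisfies $\nabla G_{\mathbf{a}}(\mathbf{x})=\mathbf{0}$, whence $\dim(W_{\mathbf{a}}\cap U)\le\dim V_{\mathbf{a}}^*$; combining the two inequalities gives the bound. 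Your final sentence deferring to ``Birch's inductive argument'' would close the gap only by citation, but as written your own account of that argument is both incomplete and a mischaracterisation of it, so you should replace the fibration heuristic by the diagonal intersection.
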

\begin{proof}
Following the proof of Lemma 3.2 in \cite{B}, we have to consider the
variety
\[
  W = \{\mathbf{x}=(\mathbf{x}^{(1)}, \ldots, \mathbf{x}^{(d-1)})
  \in \ce^{(d-1)s}:
  \Phi_j(\mathbf{a}; \mathbf{x}^{(1)}, \ldots, \mathbf{x}^{(d-1)})=0
  \;\;\; (1 \le j \le s)\}. 
\]
Then
\[
  M(a_1, \ldots, a_r; H) \ll H^{\dim W},
\]
where the implied $O$-constant depends at most on $s$, $r$ and $d$.
Now let $U$ be the diagonal
\[
  U = \{\mathbf{x}=(\mathbf{x}^{(1)}, \ldots, \mathbf{x}^{(d-1)})
  \in \ce^{(d-1)s}:
  \mathbf{x}^{(1)} = \ldots = \mathbf{x}^{(d-1)}\}.
\]
Then
\[
  \dim (W \cap U) \ge \dim W + \dim U - (d-1)s
  = \dim W - (d-2)s,
\]
and for each $(\mathbf{x}, \ldots, \mathbf{x}) \in W \cap U$
we have $\nabla (a_1 F_1(\mathbf{x}) + \ldots + a_r F_r(\mathbf{x})) =
\mathbf{0}$, so $\dim (W \cap U) \le \dim V_\mathbf{a}^*$ and
\[
  \dim W \le (d-2)s + \dim V_\mathbf{a}^*
  = (d-1)s - \operatorname{codim} V_\mathbf{a}^*,
\]
which finishes the proof.
\end{proof}
Our next result is an analogue of the central Lemma 4.3 in \cite{B}.
\begin{lemma}
\label{liederhalle}
Keeping the notation of sections \ref{mallorca} and \ref{chile}, define $K$
by
\[
  \max_{\mathbf{a} \in \zet^r \backslash \{\mathbf{0}\}}
  \dim V_\mathbf{a}^* = s-2^{d-1}K.
\]
Then we either (i) have
\[
  S(\alf) \ll P^{s-K\theta+\varepsilon},
\]
or alternative (ii) of Lemma \ref{booker} holds true.
\end{lemma}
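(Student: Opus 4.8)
The plan is to combine Lemma~\ref{booker}, which already isolates the rational-approximation alternative, with the geometric estimate of Lemma~\ref{neujahr}, in the same spirit as the passage from Lemma~2.5 to Lemma~4.3 in \cite{B}. First I would set $k := K\theta - \varepsilon$ and dispose of the cheap ranges. Since $\mathbf{0} \in V_{\mathbf{a}}^*$ always, $0 \le \max_{\mathbf{a} \in \zet^r \backslash \{\mathbf{0}\}} \dim V_{\mathbf{a}}^* \le s$, so $K = 2^{-(d-1)}\bigl(s - \max_{\mathbf{a}}\dim V_{\mathbf{a}}^*\bigr)$ satisfies $0 \le K < s$ and the exponent $s-K\theta+\varepsilon$ in conclusion~(i) is positive. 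Hence if $k \le 0$ (for instance if $K=0$), then $|S(\alf)| \le \#(P\be \cap \zet^s) \ll P^{s} \le P^{s-k} = P^{s-K\theta+\varepsilon}$ and (i) holds trivially, as it does for $P$ bounded. So I may assume $k>0$ and $P$ large.

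Next I would apply Lemma~\ref{booker} with this value of $k$. Its alternative~(i), $S(\alf) \ll P^{s-k} = P^{s-K\theta+\varepsilon}$, is exactly conclusion~(i), and its alternative~(ii) is conclusion~(ii); so everything reduces to excluding its alternative~(iii). Suppose then that there are integers $a_1,\ldots,a_r$, not all zero, with
\[
  M(a_1,\ldots,a_r;P^{\theta}) \gg (P^{\theta})^{(d-1)s - 2^{d-1}k/\theta - \varepsilon}
  = (P^{\theta})^{(d-1)s - 2^{d-1}K + 2^{d-1}\varepsilon/\theta - \varepsilon}.
\]
Since $\mathbf{a} \in \zet^{r} \backslash \{\mathbf{0}\}$ we have $\dim V_{\mathbf{a}}^* \le \max_{\mathbf{b} \in \zet^r \backslash \{\mathbf{0}\}}\dim V_{\mathbf{b}}^* = s - 2^{d-1}K$, hence $\operatorname{codim}V_{\mathbf{a}}^* \ge 2^{d-1}K$, and Lemma~\ref{neujahr} gives $M(a_1,\ldots,a_r;P^{\theta}) \ll (P^{\theta})^{(d-1)s - \operatorname{codim}V_{\mathbf{a}}^*} \ll (P^{\theta})^{(d-1)s - 2^{d-1}K}$. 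Comparing the two estimates and cancelling the common power $(P^{\theta})^{(d-1)s - 2^{d-1}K}$ leaves $(P^{\theta})^{\varepsilon(2^{d-1}/\theta - 1)} \ll 1$; since $\theta \le 1 < 2^{d-1}$ the exponent is strictly positive, so this fails for $P$ large. That contradiction rules out alternative~(iii) and finishes the proof.

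I do not expect a genuine obstacle here — the substance is already in Lemmas~\ref{booker} and~\ref{neujahr} — so the "hard part" is just keeping the $\varepsilon$-bookkeeping honest. The two things to watch are: (a) the rational-pencil hypothesis enters only through the inequality $\operatorname{codim}V_{\mathbf{a}}^* \ge 2^{d-1}K$, and it is precisely this that forces alternative~(iii) to lose a genuine positive power of $P^{\theta}$ beyond what Lemma~\ref{neujahr} permits; and (b) the $-\varepsilon$ built into the definition of $k$, together with $\theta/2^{d-1} < 1$, is exactly what converts that surplus into the strictly positive exponent $\varepsilon(2^{d-1}/\theta - 1)$, so the trivial and large-$P$ cases must be segregated for the contradiction $(P^{\theta})^{\text{positive}} \ll 1$ to be genuine.
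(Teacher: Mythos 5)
Your proposal is correct and follows essentially the same route as the paper: apply Lemma~\ref{booker} with $k=K\theta-\varepsilon$ and rule out its alternative~(iii) by playing the lower bound for $M(a_1,\ldots,a_r;P^{\theta})$ against the upper bound from Lemma~\ref{neujahr} together with $\operatorname{codim}V_{\mathbf{a}}^*\ge 2^{d-1}K$. The only difference is that you make explicit the trivial cases ($k\le 0$, bounded $P$) and the positivity of the exponent $\varepsilon(2^{d-1}/\theta-1)$, which the paper leaves implicit.
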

\begin{proof}
Suppose that alternative (iii) of Lemma \ref{booker} is true. Then there
exists $\mathbf{a} \in \zet^r \backslash\{\mathbf{0}\}$ such that
\begin{equation}
\label{beethoven}
  M(a_1, \ldots, a_r; P^\theta)
  \gg (P^\theta)^{(d-1)s-2^{d-1}k/\theta-\varepsilon}.
\end{equation}
On the other hand, by Lemma \ref{neujahr}, we know that
\begin{equation}
\label{schubert}
  M(a_1, \ldots, a_r; P^\theta) \ll
  (P^\theta)^{(d-1)s-\operatorname{codim} V_\mathbf{a}^*}
  \le (P^\theta)^{(d-1)s-2^{d-1}K}.
\end{equation}
The conditions (\ref{beethoven}) and (\ref{schubert}) are only compatible
if $K \le \frac{k}{\theta} + \varepsilon \cdot 2^{1-d}$, so for
$k=K\theta-\varepsilon$ alternative (iii) of Lemma \ref{booker} is
impossible, which proves the result.
\end{proof}
Using Lemma \ref{liederhalle} instead of Lemma 4.3 in \cite{B} and then
following the rest of the arguments in \cite{B} establishes Theorem
\ref{stammtisch}.
\section{Systems of quadratic forms}
For quadratic forms, it is easy to give an interpretation of alternative
(iii) in Lemma \ref{booker}. To this end we need the following elementary
observation.
\begin{lemma}
\label{longport}
Let $L_1, \ldots, L_s \in \zet[X_1, \ldots, X_s]$ be linear forms, such
that their span in the space of linear forms has rank at least $m$. Then,
uniformly in $L_1, \ldots, L_s$, we have
\[
  \#\{\mathbf{x} \in \zet^s:\mathbf{x} \in P\be \mbox{ and }
  L_j(\mathbf{x})=0 \; (1 \le j \le s)\} \ll P^{s-m}.
\]
\end{lemma}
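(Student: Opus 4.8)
The plan is to bound the solution count by the number of admissible values of a well-chosen subset of $s-m$ of the coordinates, the remaining $m$ coordinates being then uniquely pinned down.

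First I would invoke the rank hypothesis to pick indices $j_1, \ldots, j_m$ with $L_{j_1}, \ldots, L_{j_m}$ linearly independent. Let $A$ be the $m \times s$ integer matrix of their coefficients; it has rank $m$, hence an invertible $m \times m$ minor. Let $I \subseteq \{1, \ldots, s\}$ index the $m$ columns of that minor and $J = \{1, \ldots, s\} \setminus I$, so $|J| = s - m$. For any $\mathbf{x} \in \zet^s \cap P\be$ annihilated by all the $L_j$, the sub-system $L_{j_1}(\mathbf{x}) = \ldots = L_{j_m}(\mathbf{x}) = 0$ alone, after splitting $\mathbf{x}$ into its $I$-coordinates $\mathbf{y}$ and its $J$-coordinates $\mathbf{z}$, takes the shape $A_I \mathbf{y} = -A_J \mathbf{z}$ with $A_I$ invertible over $\qu$. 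Thus $\mathbf{y}$ is determined over $\qu$ by $\mathbf{z}$ (so at most one integer $\mathbf{y}$ can occur), and the map $\mathbf{x} \mapsto \mathbf{z}$ is injective on the solution set.

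It then remains to count the possible $\mathbf{z}$. Since $\be$ lies in the unit box, every coordinate of a point of $P\be$ ranges over an interval of length $O(P)$, so $\mathbf{z} \in \zet^{s-m}$ has at most $(P+1)^{s-m} \ll P^{s-m}$ admissible values; combined with the injectivity this yields the claimed bound. I expect the one genuinely delicate point to be the uniformity in $L_1, \ldots, L_s$: one must avoid bounding the lattice points in the linear space $\{L_{j_1} = \ldots = L_{j_m} = 0\}$ via its geometry, since such an estimate carries a constant depending on the successive minima of that space and hence on the coefficients of the forms. The argument above sidesteps this by using only the trivial box count for $\mathbf{z}$ together with the injectivity of $\mathbf{x} \mapsto \mathbf{z}$, so the implied constant depends solely on $s$ and $\be$, as required.
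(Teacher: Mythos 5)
Your argument is correct and is precisely the elementary elimination argument the paper has in mind when it dismisses the lemma as trivial: choosing $m$ independent forms, solving for $m$ pinned coordinates in terms of the remaining $s-m$, and counting the latter by the trivial box bound gives $\ll P^{s-m}$ with a constant depending only on $s$ and $\be$. Your explicit attention to uniformity in the forms is a worthwhile touch, but there is nothing here that diverges from the intended proof.
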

\begin{proof}
This is trivial.
\end{proof}
We can now reformulate Lemma \ref{booker} for systems of quadratic forms.
\begin{lemma}
\label{greenwhich}
Let $d=2$. Suppose that each quadratic form in the rational pencil of
$F_1, \ldots, F_r$ has rank at least $m$. Then, using the notation of
Lemma \ref{england}, we either (i) have
\[
  S(\mathbf{\alf}) \ll P^{s-m\theta/2+\varepsilon},
\]
or alternative (ii) (where $d=2$) of Lemma \ref{booker} holds true.
\end{lemma}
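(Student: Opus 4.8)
The plan is to feed the counting function $M(\mathbf{a};H)$ from \eqref{odds} into the dichotomy of Lemma \ref{booker} and to show that, for a suitable choice of $k$, alternative (iii) is ruled out by the rank hypothesis on the rational pencil; alternative (i) of Lemma \ref{booker} then yields the bound claimed in (i). This runs along exactly the same lines as the proof of Lemma \ref{liederhalle}, with Lemma \ref{longport} taking over the role of Lemma \ref{neujahr}.

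The key observation is that for $d=2$ the expressions $\Phi_j(\mathbf{a};\cdot)$ are \emph{linear} forms in $\mathbf{x}$. Indeed $\Psi_j^{(i)}(\mathbf{x}) = 2\sum_{j_1=1}^s c^{(i)}_{j,j_1} x_{j_1}$, so by \eqref{bookie}
\[
  \Phi_j(\mathbf{a};\mathbf{x}) = 2\sum_{j_1=1}^s \Big(\sum_{i=1}^r a_i c^{(i)}_{j,j_1}\Big) x_{j_1},
\]
and the $s\times s$ integer matrix $\big(2\sum_i a_i c^{(i)}_{j,j_1}\big)_{j,j_1}$ is, up to the harmless factor $2$ (which is precisely what clears denominators and makes the $\Phi_j$ integral), the symmetric coefficient matrix of the quadratic form $a_1 F_1+\ldots+a_r F_r$ in the rational pencil. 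Hence the span of $\Phi_1(\mathbf{a};\cdot),\ldots,\Phi_s(\mathbf{a};\cdot)$ has rank equal to $\operatorname{rank}(a_1 F_1+\ldots+a_r F_r)$, which by hypothesis is at least $m$ for every $\mathbf{a}\in\zet^r\backslash\{\mathbf{0}\}$. Since for $d=2$ the count $M(\mathbf{a};H)$ in \eqref{odds} involves only the single vector $\mathbf{x}^{(1)}\in H\be$, Lemma \ref{longport} (applied with $H=P^\theta$) gives
\[
  M(a_1,\ldots,a_r;P^\theta) \ll (P^\theta)^{s-m}
\]
uniformly in $\mathbf{a}$. (Alternatively one may note that $V_{\mathbf{a}}^*$ is exactly the complex null space of that matrix, so $\operatorname{codim} V_{\mathbf{a}}^*\ge m$, and invoke Lemma \ref{neujahr} instead.)

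Finally I would apply Lemma \ref{booker} with $k=m\theta/2-\varepsilon$. If alternative (iii) held, there would be some $\mathbf{a}\in\zet^r\backslash\{\mathbf{0}\}$ with $M(\mathbf{a};P^\theta)\gg (P^\theta)^{s-2k/\theta-\varepsilon}$; comparing the exponent with the bound $(P^\theta)^{s-m}$ just obtained shows that this is incompatible, in exactly the way that \eqref{beethoven} and \eqref{schubert} are shown incompatible in the proof of Lemma \ref{liederhalle} (here one uses $\theta\le 1<2=2^{d-1}$ to absorb the $\varepsilon$). Thus only alternatives (i) and (ii) of Lemma \ref{booker} can hold, and alternative (i) reads $S(\alf)\ll P^{s-k}=P^{s-m\theta/2+\varepsilon}$, which is conclusion (i) of the present lemma. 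There is really no serious obstacle: the only point needing a moment's thought is the identification of the span of the $\Phi_j(\mathbf{a};\cdot)$ with the matrix of the pencil form, and beyond that the argument is routine bookkeeping of the small parameters $\varepsilon$, handled just as in Lemma \ref{liederhalle}. I therefore expect the proof to be short.
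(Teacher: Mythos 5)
Your proposal is correct and follows essentially the same route as the paper: identify the linear forms $\Phi_j(\mathbf{a};\cdot)$ with (twice) the rows of the coefficient matrix of the pencil form $\sum_i a_i F_i$, deduce from the rank hypothesis via Lemma \ref{longport} that $M(\mathbf{a};P^\theta)\ll (P^\theta)^{s-m}$, and choose $k=m\theta/2-\varepsilon$ so that alternative (iii) of Lemma \ref{booker} is incompatible with this bound. The paper phrases the rank identification through the identity $F(\mathbf{x})=\tfrac12\sum_j x_j\Phi_j(\mathbf{a};\mathbf{x})$ rather than through the matrix directly, but this is the same observation.
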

\begin{proof}
Suppose that neither alternative (i) nor alternative (ii) of Lemma
\ref{booker} are true. Then alternative (iii) must be true. Hence there
exists $\mathbf{a} \in \zet^r \backslash \{\mathbf{0}\}$ such that
$M(a_1, \ldots, a_r; P^{\theta}) \gg (P^{\theta})^{s-2k/\theta-\varepsilon}$.
This means that
\begin{equation}
\label{jekyll}
  \#\{\mathbf{x} \in \zet^s:\mathbf{x} \in P^{\theta}\be \mbox{ and }
  \Phi_j(\mathbf{a}; \mathbf{x}) = 0 \; (1 \le j \le s)\}
  \gg (P^{\theta})^{s-2k/\theta-\varepsilon}.
\end{equation}
Now consider the quadratic form
\[
  F = \sum_{i=1}^r a_i F_i.
\]
This form is in the rational pencil of $F_1, \ldots, F_r$, hence
rank $F \ge m$. Clearly
\begin{eqnarray*}
  F(\mathbf{x}) & = & \sum_{i=1}^r a_i F_i(\mathbf{x})
  = \frac{1}{2} \sum_{i=1}^r a_i \sum_{j=1}^s x_j \Psi_j^{(i)}(\mathbf{x})
  \\ & = & \frac{1}{2} \sum_{j=1}^s x_j \sum_{i=1}^r a_i
  \Psi_j^{(i)}(\mathbf{x}) = \frac{1}{2} \sum_{j=1}^s
  x_j \Phi_j(\mathbf{a}; \mathbf{x}).
\end{eqnarray*}
Since rank $F \ge m$, the dimension of the linear space of linear forms
spanned by the $\Phi_j(\mathbf{a}; \mathbf{x}) \; (1 \le j \le s)$
is at least $m$. Hence, by Lemma \ref{longport},
\begin{equation}
\label{hyde}
  \#\{\mathbf{x} \in \zet^s:\mathbf{x} \in P^{\theta}\be \mbox{ and }
  \Phi_j(\mathbf{a}; \mathbf{x}) = 0 \; (1 \le j \le s)\}
  \ll (P^{\theta})^{s-m}.
\end{equation}
The conditions (\ref{jekyll}) and (\ref{hyde}) are only compatible if
$s-2k/\theta-\varepsilon \le s-m$, which implies that
$k \ge m\theta/2-\varepsilon \frac{\theta}{2}$.
Therefore, for $k=m\theta/2-\varepsilon$ alternative
(iii) is impossible, which proves Lemma \ref{greenwhich}.
\end{proof}
\begin{corollary}
In the notation of Lemma \ref{england}, suppose that each quadratic form
in the rational pencil of $F_1, \ldots, F_r$ has rank exceeding $2r^2+2r$.
Moreover, let $0<\Delta\le r$. Then for each $\alf \in [0,1]^r$ we either
(i) have
\begin{equation}
\label{longitude}
  S(\alf) \ll P^{s-\Delta \left(r+1+\frac{1}{2r}\right)+\varepsilon},
\end{equation}
(ii) or there exist integers $a_1, \ldots, a_r, q$ such that
$(a_1, \ldots, a_r, q)=1$, $1 \le q \ll P^\Delta$ and
\[
  |q\alpha_i-a_i| \ll P^{-2+\Delta} \;\;\; (1 \le i \le r).
\]
\end{corollary}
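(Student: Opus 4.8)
The plan is to obtain this Corollary as a direct specialization of Lemma \ref{greenwhich}, choosing the free parameters $m$ and $\theta$ so that the two alternatives there become exactly the two alternatives claimed here. Since each quadratic form in the rational pencil of $F_1, \ldots, F_r$ has rank exceeding $2r^2+2r$, and ranks are integers, every such form has rank at least $m := 2r^2+2r+1$, so the hypothesis of Lemma \ref{greenwhich} is satisfied with this value of $m$. For the parameter $\theta$ I would take $\theta := \Delta/r$; the assumption $0 < \Delta \le r$ guarantees $0 < \theta \le 1$, which is exactly the admissible range in Lemma \ref{england} and hence in Lemma \ref{greenwhich}.

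With these choices Lemma \ref{greenwhich} leaves two cases. In the first, $S(\alf) \ll P^{s - m\theta/2 + \varepsilon}$, and a one-line computation gives
\[
  \frac{m\theta}{2} = \frac{(2r^2+2r+1)\Delta}{2r}
  = \Delta\left(r + 1 + \frac{1}{2r}\right),
\]
which is precisely the exponent in alternative (i). In the second case, alternative (ii) of Lemma \ref{booker} holds with $d=2$ and the above $\theta$; substituting $\theta = \Delta/r$ into the bounds there yields integers $a_1, \ldots, a_r, q$ with $(a_1,\ldots,a_r,q)=1$, $1 \le q \ll P^{r(d-1)\theta} = P^{\Delta}$, and $|q\alpha_i - a_i| \ll P^{-d + r(d-1)\theta} = P^{-2+\Delta}$ for $1 \le i \le r$, which is exactly alternative (ii) of the Corollary. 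The restriction $\alf \in [0,1]^r$ is not actually used; it is recorded here only because that is the range relevant to the later minor-arc dissection.

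There is essentially no obstacle beyond this bookkeeping. The one point worth flagging is that the rank hypothesis is used in its sharpest form: the identity $m\theta/2 = \Delta(r+1+\tfrac{1}{2r})$ relies on $m = 2r^2+2r+1$, and any weaker rank bound would shrink the exponent in (i), so the threshold $2r^2+2r$ is exactly what the argument needs. The $\varepsilon$ loss is harmless since $\varepsilon>0$ may be taken arbitrarily small and is absorbed into the implied constants throughout.
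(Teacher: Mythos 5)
Your proposal is correct and coincides with the paper's own proof: both set $\theta=\Delta/r$ (equivalently $\Delta=r\theta$) and $m=2r^2+2r+1$ in Lemma \ref{greenwhich}, verify the identity $\frac{m\theta}{2}=\Delta\left(r+1+\frac{1}{2r}\right)$, and read off alternative (ii) of Lemma \ref{booker} with $d=2$. No further comment is needed.
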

\begin{proof}
This follows immediately from Lemma \ref{greenwhich} on letting
 $\Delta=r\theta$
and $m=2r^2+2r+1$, noting that $\Delta\left(r+1+\frac{1}{2r}\right) =
\frac{1}{2r} \cdot r\theta \cdot m = \frac{m\theta}{2}$.
\end{proof}
Note that this is essentially the same as Lemma 6 in \cite{S1} (our bound
(\ref{longitude}) is slightly stronger than (i) in Lemma 6 in \cite{S1},
under the weaker rank condition $2r^2+2r$ instead of $2r^2+3r$). We can
now proceed exactly as in \cite{S1} to deduce Theorem \ref{see}.
\section{Systems of cubic forms}
\label{maritime}
The cubic case is slightly more difficult. We start our discussion with
the following result going back to Davenport and Lewis.
\begin{lemma}
\label{mappus}
Let $C(X_1, \ldots, X_s) \in \zet[X_1, \ldots, X_s]$ be a cubic form,
given in the form
\[
  C(X_1, \ldots, X_s) = \sum_{1 \le j_1, j_2, j_3 \le s}
  c_{j_1, j_2, j_3} x_{j_1} x_{j_2} x_{j_3}
\]
for symmetric integer coefficients $c_{j_1, j_2, j_3}$. Moreover, let $B_j$
be the bilinear forms
\[
  B_j(\mathbf{x}^{(1)}, \mathbf{x}^{(2)}) = \sum_{1 \le j_1, j_2 \le s}
  c_{j,j_1,j_2} x_{j_1}^{(1)} x_{j_2}^{(2)} \;\;\; (1 \le j \le s).
\]
Then
\begin{align}
\label{regen}
  & \#\{\mathbf{x}^{(1)}, \mathbf{x}^{(2)} \in \zet^s: \mathbf{x}^{(1)},
  \mathbf{x}^{(2)} \in P\be \mbox{ and } B_j(\mathbf{x}^{(1)},
  \mathbf{x}^{(2)}) = 0 \; (1 \le j \le s)\} \nonumber \\
  \ll & P^{2s-h(C)},
\end{align}
where $h(C)$ is the $h$-invariant of $C$ as introduced in section
\ref{mallorca}.
The implied $O$-constant does not depend on $C$.
\end{lemma}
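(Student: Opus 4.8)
Following Davenport and Lewis, the plan is to reduce the estimate to a bound on the dimension of the associated complex variety. Let $\tilde C$ denote the symmetric trilinear form on $\ce^s$ with $\tilde C(\mathbf x,\mathbf x,\mathbf x)=C(\mathbf x)$, so that $B_j(\mathbf x^{(1)},\mathbf x^{(2)})=\tilde C(\mathbf e_j,\mathbf x^{(1)},\mathbf x^{(2)})$, and let $M(\mathbf u)$ be the symmetric $s\times s$ matrix determined by $\mathbf w^{T}M(\mathbf u)\mathbf v=\tilde C(\mathbf w,\mathbf u,\mathbf v)$; thus $M$ depends linearly on $\mathbf u$ and $M(\mathbf u)\mathbf u=\tfrac13\nabla C(\mathbf u)$. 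Setting
\[
  Z=\{(\mathbf u,\mathbf v)\in\ce^{2s}:M(\mathbf u)\mathbf v=\mathbf 0\},
\]
the left-hand side of \eqref{regen} counts the integer points of $P\be\times P\be$ lying on $Z$. Since $Z$ is defined by $s$ bilinear equations in $2s$ variables, the degree of $Z$ is bounded in terms of $s$ only, and the standard estimate for the number of lattice points on an algebraic variety shows that the quantity in \eqref{regen} is $\ll P^{\dim Z}$, uniformly in $C$. It therefore suffices to prove that $\dim Z\le 2s-h(C)$.

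The first step towards this is to intersect $Z$ with the diagonal $\Delta=\{(\mathbf u,\mathbf u)\}$. If $Z_1$ is any irreducible component of $Z$, then $Z_1\cap\Delta\ne\emptyset$ (it contains the origin) and is cut out inside $Z_1$ by the $s$ linear equations $u_i=v_i$, so $\dim(Z_1\cap\Delta)\ge\dim Z_1-s$. On the other hand $Z\cap\Delta$ is isomorphic to $\operatorname{Sing}(C):=\{\mathbf u\in\ce^s:\nabla C(\mathbf u)=\mathbf 0\}$, because $M(\mathbf u)\mathbf u=\tfrac13\nabla C(\mathbf u)$. Hence $\dim Z\le\dim\operatorname{Sing}(C)+s$, and the problem is reduced to showing that $\operatorname{codim}\operatorname{Sing}(C)\ge h(C)$.

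For the latter, let $\Sigma$ be a component of $\operatorname{Sing}(C)$ of maximal dimension $s-c$, pick a generic (hence smooth) point $\mathbf p\in\Sigma$, and let $T=T_{\mathbf p}\Sigma$ be its tangent space, of dimension $s-c$. Differentiating $\nabla C\equiv\mathbf 0$ along a curve in $\Sigma$ through $\mathbf p$, and using the symmetry relation $M(\mathbf w)\mathbf p=M(\mathbf p)\mathbf w$, one finds $M(\mathbf p)\mathbf w=\mathbf 0$ for every $\mathbf w\in T$. Differentiating once more — now the identity $M(\gamma(t))\mathbf v(t)=\mathbf 0$ for a curve $\gamma$ in $\Sigma$ and a section $\mathbf v(t)\in T_{\gamma(t)}\Sigma$ — and then pairing the resulting relation against a third vector of $T$ and invoking $M(\mathbf p)|_T=0$, one obtains $\tilde C(\mathbf w_1,\mathbf w_2,\mathbf w_3)=0$ for all $\mathbf w_1,\mathbf w_2,\mathbf w_3\in T$. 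Hence $C|_T\equiv 0$, and so $h(C)\le\operatorname{codim}T=c=\operatorname{codim}\operatorname{Sing}(C)$. Combining the steps gives $\dim Z\le(s-c)+s=2s-c\le 2s-h(C)$, as required.

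I expect the main obstacle to be the second differentiation in the last paragraph: one has to check that the tangent spaces $T_{\gamma(t)}\Sigma$ vary smoothly near the chosen smooth point, so that the section $\mathbf v(t)$ exists, and that the manipulation of $1$-jets is legitimate — this is the technical heart of the argument. A secondary point is that these manipulations produce a subspace $T$ defined only over $\ce$ (or over a number field), whereas the $h$-invariant in the statement is the rational one; for cubic forms, however, the rational and complex $h$-invariants coincide, so the bound $\dim Z\le 2s-h(C)$ holds with $h(C)$ as in Section~\ref{mallorca}. Alternatively, one may quote the corresponding lemma of Davenport and Lewis directly, whose proof is an arithmetic variant of the above and yields the rational $h$-invariant without this detour.
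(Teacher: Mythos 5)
Your reconstruction is correct up to, but not including, its last step, and that last step contains a genuine error. What your argument actually proves is the bound $\ll P^{2s-h_{\ce}(C)}$, where $h_{\ce}$ denotes the $h$-invariant computed over $\ce$: the lattice-point count $\ll P^{\dim Z}$ together with the diagonal trick $\dim Z\le\dim\operatorname{Sing}(C)+s$ is precisely the paper's Lemma \ref{neujahr} specialised to $d=3$ and $r=1$, and your tangent-space computation showing $h_{\ce}(C)\le\operatorname{codim}\operatorname{Sing}(C)$ is also fine. But the claim that the rational and complex $h$-invariants of a cubic form coincide is false. Since $h_K(C)$ equals the minimal codimension of a $K$-linear subspace on which $C$ vanishes, the binary form $C=x_1^3-2x_2^3$ already gives $h_{\ce}(C)=1<2=h_{\qu}(C)$ (it has a complex linear factor but no rational one), and the norm form of a cubic field in three variables gives $h_{\ce}=1$ against $h_{\qu}=3$. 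For such forms $2s-h_{\ce}(C)$ strictly exceeds the exponent $2s-h(C)$ asserted in the lemma, so your chain of inequalities does not close; indeed for the norm form even your intermediate bound $\dim Z\le\dim\operatorname{Sing}(C)+s=4$ already exceeds the required $2s-h_{\qu}(C)=3$. The deeper point is that $\dim Z$ is a geometric invariant and cannot in general be controlled by the arithmetic quantity $h_{\qu}(C)$: for the norm form the integer points obeying the stated bound are concentrated on the rational components $\mathbf{u}=\mathbf{0}$ and $\mathbf{v}=\mathbf{0}$ of $Z$, not spread over all of $Z$.

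This is exactly why Lemma \ref{mappus} is not subsumed by the Birch-type Lemma \ref{neujahr}, and why section \ref{maritime} of the paper has content beyond the geometric argument: one must count \emph{integer} pairs directly, bounding the number of integer points $\mathbf{x}^{(1)}$ at which the rational matrix $M(\mathbf{x}^{(1)})$ has small rank and extracting from them a $\qu$-rational subspace on which $C$ degenerates. That is what Davenport and Lewis do, and the paper's proof is simply the citation of their Lemma 3. Your closing suggestion to ``quote the corresponding lemma of Davenport and Lewis directly'' is therefore the paper's actual proof, but within your proposal it is an appeal to the result rather than a derivation of it; as a self-contained argument, what you have established is only the weaker statement with $h(C)$ replaced by $h_{\ce}(C)$.
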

\begin{proof}
This is Lemma 3 in \cite{DL}.
\end{proof}
\begin{lemma}
\label{schiff}
Let $d=3$. Suppose that each cubic form in the rational pencil of
$F_1, \ldots, F_r$ has $h$-invariant at least $m$. Then, using the notation of
Lemma \ref{england}, we either (i) have
\[
  S(\mathbf{\alf}) \ll P^{s-m\theta/4+\varepsilon},
\]
or alternative (ii) (where $d=3$) of Lemma \ref{booker} holds true.
\end{lemma}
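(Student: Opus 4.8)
The plan is to mimic the proof of Lemma~\ref{greenwhich}, with the Davenport--Lewis estimate of Lemma~\ref{mappus} taking over the role that Lemma~\ref{longport} played in the quadratic case. Suppose that neither alternative~(i) of the present lemma nor alternative~(ii) of Lemma~\ref{booker} holds, and apply Lemma~\ref{booker} with $d=3$ and with $k = m\theta/4 - \varepsilon$. Alternative~(i) of Lemma~\ref{booker} then reads $S(\alf) \ll P^{s-k} = P^{s-m\theta/4+\varepsilon}$, which is false by assumption, and alternative~(ii) of Lemma~\ref{booker} is false by assumption, so alternative~(iii) of Lemma~\ref{booker} must hold: there is some $\mathbf{a} \in \zet^r \backslash \{\mathbf{0}\}$ with
\[
  M(a_1, \ldots, a_r; P^\theta) \gg (P^\theta)^{2s - 4k/\theta - \varepsilon}.
\]

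I would then bound $M(a_1, \ldots, a_r; P^\theta)$ from above using the pencil hypothesis. Put $C = a_1 F_1 + \ldots + a_r F_r$; this cubic form lies in the rational pencil of $F_1, \ldots, F_r$, hence $h(C) \ge m$. Writing $C$ with symmetric integer coefficients $c_{j_1, j_2, j_3} = \sum_{i=1}^r a_i c^{(i)}_{j_1, j_2, j_3}$ and letting $B_j$ $(1 \le j \le s)$ be the associated bilinear forms as in Lemma~\ref{mappus}, one has, on specialising the definition of $\Psi_j^{(i)}$ to $d=3$, that $\Psi_j^{(i)}(\mathbf{x}^{(1)}, \mathbf{x}^{(2)}) = 3! \sum_{1 \le j_1, j_2 \le s} c^{(i)}_{j, j_1, j_2} x_{j_1}^{(1)} x_{j_2}^{(2)}$; hence by \eqref{bookie} and the linearity of the coefficients of $C$ in $\mathbf{a}$,
\[
  \Phi_j(\mathbf{a}; \mathbf{x}^{(1)}, \mathbf{x}^{(2)}) = 3!\, B_j(\mathbf{x}^{(1)}, \mathbf{x}^{(2)}) \qquad (1 \le j \le s).
\]
Thus the system $\Phi_j(\mathbf{a}; \mathbf{x}^{(1)}, \mathbf{x}^{(2)}) = 0$ $(1 \le j \le s)$ defining $M(a_1, \ldots, a_r; P^\theta)$ has exactly the same solution set as $B_j(\mathbf{x}^{(1)}, \mathbf{x}^{(2)}) = 0$ $(1 \le j \le s)$, and Lemma~\ref{mappus} gives
\[
  M(a_1, \ldots, a_r; P^\theta) \ll (P^\theta)^{2s - h(C)} \le (P^\theta)^{2s - m},
\]
uniformly in $\mathbf{a}$.

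Comparing the last two displays, they are compatible only if $2s - 4k/\theta - \varepsilon \le 2s - m$, i.e.\ $k \ge \frac{m\theta}{4} - \frac{\varepsilon\theta}{4}$; since $0 < \theta \le 1$ this contradicts the choice $k = \frac{m\theta}{4} - \varepsilon$. Hence the initial supposition is untenable, so either $S(\alf) \ll P^{s-m\theta/4+\varepsilon}$ or alternative~(ii) of Lemma~\ref{booker} holds, which is the assertion of Lemma~\ref{schiff}. I do not expect a genuine obstacle here: the argument is the same book-keeping as for Lemma~\ref{greenwhich}, and the only real input is the Davenport--Lewis bound Lemma~\ref{mappus}, whose uniformity in the cubic form $C$ (equivalently, in $\mathbf{a}$) is exactly what is needed so that the upper bound depends only on $h(C) \ge m$ and not on the size of $a_1, \ldots, a_r$. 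The one point requiring care is the bookkeeping of the factor $2^{d-1} = 4$ entering alternative~(iii) of Lemma~\ref{booker}, which is why the saving is $m\theta/4$ rather than the $m\theta/2$ of the quadratic case.
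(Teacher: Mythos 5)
Your proof is correct and follows essentially the same route as the paper's: invoke Lemma~\ref{booker} with $k=m\theta/4-\varepsilon$, identify $\Phi_j(\mathbf{a};\cdot,\cdot)$ with $3!$ times the Davenport--Lewis bilinear forms of $C=\sum_i a_iF_i$, and play the lower bound from alternative~(iii) against the upper bound $(P^\theta)^{2s-h(C)}$ of Lemma~\ref{mappus}. The bookkeeping, including the uniformity in $\mathbf{a}$ and the factor $2^{d-1}=4$, matches the paper's argument exactly.
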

\begin{proof}
Suppose that neither alternative (i) nor alternative (ii) of Lemma
\ref{booker} are true. Then by alternative (iii) of that lemma, there exists
$\mathbf{a} \in \zet^r \backslash \{\mathbf{0}\}$ such that
$M(a_1, \ldots, a_r; P^\theta) \gg
\left(P^\theta\right)^{2s-4k/\theta-\varepsilon}$. This means that
\begin{eqnarray}
\label{liam}
  & & \#\{\mathbf{x} \in \zet^s: \mathbf{x}^{(1)}, \mathbf{x}^{(2)} \in
  P^\theta B \mbox{ and } \Phi_j(\mathbf{a}; \mathbf{x}^{(1)},
  \mathbf{x}^{(2)})=0 \; (1 \le j \le s)\} \nonumber
  \\ & \gg & (P^\theta)^{2s-4k/\theta-\varepsilon}.
\end{eqnarray}
Now consider the cubic form
\[
  C = \sum_{i=1}^r a_i F_i.
\]
This form is in the rational pencil of $F_1, \ldots, F_r$, hence $h(C) \ge m$.
Using the notation of Lemma \ref{mappus}, we find that
\[
  c_{j_1,j_2,j_3} = \sum_{i=1}^r a_i c^{(i)}_{j_1, j_2, j_3},
\]
hence
\begin{eqnarray*}
  B_j(\mathbf{x}^{(1)}, \mathbf{x}^{(2)}) & = &
  \sum_{1 \le j_1, j_2 \le s} \sum_{i=1}^r a_i c_{j, j_1, j_2}^{(i)}
  x_{j_1}^{(1)} x_{j_2}^{(2)} \;\;\; (1 \le j \le s)\\
  & = & \sum_{i=1}^r a_i \sum_{1 \le j_1, j_2 \le s} c_{j,j_1,j_2}^{(i)}
  x_{j_1}^{(1)} x_{j_2}^{(2)} \;\;\; (1 \le j \le s)\\
  & = & \frac{1}{6} \sum_{i=1}^r a_i \Psi_j^{(i)}(\mathbf{x}^{(1)},
  \mathbf{x}^{(2)}) = \frac{1}{6} \Phi_j(\mathbf{a}; \mathbf{x}^{(1)},
  \mathbf{x}^{(2)}) \;\;\; (1 \le j \le s).
\end{eqnarray*}
Since $h(C) \ge m$, by Lemma \ref{mappus} we have
\begin{equation}
\label{neeson}
  \#\{\mathbf{x}^{(1)}, \mathbf{x}^{(2)} \in P^\theta \be \mbox{ and }
  B_j(\mathbf{x}^{(1)}, \mathbf{x}^{(2)})=0 \;\;\; (1 \le j \le s)\} \ll
  (P^\theta)^{2s-m}.
\end{equation}
The conditions (\ref{liam}) and (\ref{neeson}) are only compatible if
\begin{equation}
\label{dienstag}
  2s-4k/\theta - \varepsilon \le 2s-m,
\end{equation}
which implies that $k \ge m \theta / 4 - \varepsilon \frac{\theta}{4}$.
As for Lemma
\ref{greenwhich} above, this concludes the proof.
\end{proof}
\begin{corollary}
\label{windsor_farm_shop}
In the notation of Lemma \ref{england}, suppose that each cubic form in the
rational pencil of $F_1, \ldots, F_r$ has $h$-invariant exceeding $8r^2+8r$.
Moreover, let $0<\Delta \le 2r$. Then for each $\alf \in [0,1]^r$ we
either (i) have
\begin{equation}
\label{christopher_grill}
  S(\alf) \ll P^{s-\Delta\left(r+1+\frac{1}{8r}\right)+\varepsilon},
\end{equation}
or (ii) there exist integers $a_1, \ldots, a_r, q$ such that
$(a_1, \ldots, a_r, q)=1$, $1 \le q \ll P^{\Delta}$ and
\[
  |q\alpha_i-a_i|  \ll P^{-3+\Delta} \;\;\; (1 \le i \le r).  
\]
\end{corollary}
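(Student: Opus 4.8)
The plan is to deduce Corollary~\ref{windsor_farm_shop} directly from Lemma~\ref{schiff} by choosing that lemma's two free parameters — the $h$-invariant threshold $m$ and the dilation exponent $\theta$ — appropriately in terms of $\Delta$. Since each cubic form in the rational pencil of $F_1, \ldots, F_r$ has $h$-invariant exceeding $8r^2+8r$, and the $h$-invariant is a non-negative integer, each such form in fact has $h$-invariant at least $m := 8r^2+8r+1$. Given $\Delta$ with $0 < \Delta \le 2r$, I would set
\[
  \theta := \frac{\Delta}{2r},
\]
so that $0 < \theta \le 1$, which is exactly the range of $\theta$ permitted in Lemma~\ref{england}, and hence in Lemma~\ref{schiff}.

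First I would apply Lemma~\ref{schiff} with these values of $m$ and $\theta$. This yields the dichotomy that either alternative~(i) of that lemma holds, namely
\[
  S(\alf) \ll P^{s - m\theta/4 + \varepsilon},
\]
or else alternative~(ii) of Lemma~\ref{booker} (with $d = 3$) holds. In the latter case there are integers $a_1, \ldots, a_r, q$ with $(a_1, \ldots, a_r, q) = 1$ satisfying
\[
  |q\alpha_i - a_i| \ll P^{-3 + r(d-1)\theta} = P^{-3 + 2r\theta} = P^{-3+\Delta} \quad (1 \le i \le r)
\]
together with $1 \le q \ll P^{r(d-1)\theta} = P^{2r\theta} = P^{\Delta}$; this is precisely alternative~(ii) of the corollary.

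It then remains only to verify that in the first case the exponent matches the claimed bound~\eqref{christopher_grill}. This comes down to the arithmetic identity
\[
  \frac{m\theta}{4} = \frac{8r^2+8r+1}{4} \cdot \frac{\Delta}{2r} = \frac{\Delta\,(8r^2+8r+1)}{8r} = \Delta\left(r + 1 + \frac{1}{8r}\right),
\]
which is immediate. I do not expect any genuine obstacle here: the whole argument is a parameter substitution into Lemma~\ref{schiff}, mirroring the passage from Lemma~\ref{greenwhich} to the preceding corollary in the quadratic case. The only points needing a moment's care are that the admissible range $0 < \theta \le 1$ of Lemma~\ref{england} translates exactly into the hypothesis $0 < \Delta \le 2r$ under $\Delta = 2r\theta$, and that integrality of the $h$-invariant is what lets one replace ``exceeding $8r^2+8r$'' by ``at least $8r^2+8r+1$'', so that the numerology $m = 8r^2+8r+1$ comes out cleanly.
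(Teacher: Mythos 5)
Your proposal is correct and follows exactly the paper's own argument: substitute $\theta=\Delta/(2r)$ and $m=8r^2+8r+1$ into Lemma~\ref{schiff} and verify the identity $m\theta/4=\Delta\left(r+1+\frac{1}{8r}\right)$ together with the translation of alternative (ii) of Lemma~\ref{booker} for $d=3$. No issues.
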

\begin{proof}
This follows immediately from Lemma \ref{schiff} on setting $\Delta=2r\theta$
and $m=8r^2+8r+1$, noting that $\Delta\left(r+1+\frac{1}{8r}\right)
=\frac{1}{8r} \cdot 2r\theta \cdot m = \frac{m\theta}{4}$.
\end{proof}
Note that this is essentially the same as Lemma 7 in \cite{S2} (our bound
(\ref{christopher_grill}) is slightly stronger than the bound (i) in
Lemma 7 in \cite{S2}, under the weaker rank condition $8r^2+8r$ instead of
$10r^2+6r$). We can now continue exactly as in \cite{S2} to deduce
Theorem \ref{rumpsteak}.
\section{Systems of higher degree forms}
\label{thursday}
Let us briefly sketch how to obtain the result stated at the end of the
introduction. The argument is almost completely analogous to that in
section \ref{maritime}, all the hard work just being imported from \cite{Schmidt}.
We have to replace bilinear forms by $(d-1)$-linear forms, and to replace
Lemma \ref{mappus} by Proposition III from \cite{Schmidt}, yielding an upper
bound of
\[
  O(P^{(d-1)s-h(F)/\varphi(d)})
\]
instead of \eqref{regen}, where $F$ is our form of degree $d$ under consideration,
and $\varphi(2)=\varphi(3)=1$, $\varphi(4)=3$, $\varphi(5)=13$,
and $\varphi(d)<(\log 2)^{-d} d!$ in general (see Proposition $\text{III}_{\ce}$
in \cite{Schmidt}). One can then modify the proof of Lemma \ref{schiff}
accordingly, obtaining
\[
  (d-1)s-2^{d-1}k/\theta-\varepsilon \le (d-1)s-m/\varphi(d)
\]
instead of \eqref{dienstag}, which yields the analogue of Lemma \ref{schiff}
to the effect that if $F_1, \ldots, F_r$ are integer forms of degree $d$, and
each form in their rational pencil has $h$-invariant at least $m$, then either
\[
  S(\mathbf{\alf}) \ll P^{s-m\theta/(\varphi(d) 2^{d-1})+\varepsilon},
\]
or alternative (ii) of Lemma \ref{booker} holds true. Putting
$\Delta=(d-1)r \theta$ and
\[
  m=\varphi(d)(d-1)2^{d-1} r(r+1) + 1,
\]
one finds that
\[
  \Delta
  \left( r+1+\frac{1}{r \varphi(d) (d-1) 2^{d-1}} \right)
  = \frac{m \theta}{\varphi(d) 2^{d-1}},
\]
giving the corresponding analogue of Corollary \ref{windsor_farm_shop}:
If each form in the rational pencil of $F_1, \ldots, F_r$ has $h$-invariant
exceeding $\varphi(d) (d-1) 2^{d-1} r(r+1)$, then either
\[
  S(\alf) \ll P^{s-\Delta\left(r+1+
  \frac{1}{r \varphi(d) (d-1) 2^{d-1}}\right)+\varepsilon},
\]
or there exist integers $a_1, \ldots, a_r, q$ such that
$(a_1, \ldots, a_r, q)=1$, $1 \le q \ll P^{\Delta}$ and
\[
  |q\alpha_i-a_i|  \ll P^{-d+\Delta} \;\;\; (1 \le i \le r).  
\]
Using the techniques from \cite{B}, one can then establish the announced
asymptotic formula for the number of integer zeros of $F_1=\ldots=F_r=0$
in an expanding box.

\end{document}